\documentclass[11pt]{article}

\usepackage[margin=1in]{geometry}
\usepackage{graphicx}

\usepackage{amssymb}
\usepackage{amsmath}
\usepackage{amsthm}

\newtheorem{definition}{Definition}

\newtheorem{proposition}{Proposition}
\newtheorem{lemma}{Lemma}

\newtheorem{claim}{Claim}

\usepackage{lineno}

\usepackage[figuresright]{rotating}
\usepackage{pdflscape}
\usepackage{booktabs}
\usepackage{multirow}
\usepackage{subcaption}
\usepackage{footmisc}

\usepackage{pgfplots}
\pgfplotsset{compat=1.18}
\usepgfplotslibrary{colormaps}
\usepackage{tikz}
\usetikzlibrary{fit}
\usetikzlibrary{decorations.pathreplacing}
\usetikzlibrary{positioning}

\tikzset{
    startstop/.style={rectangle, rounded corners, minimum width=4cm, minimum height=1.5cm, text centered, draw=black, fill=blue!30},
    process/.style={rectangle, minimum width=3cm, minimum height=1.3cm, text centered, draw=black, fill=orange!30},
    processMOD/.style={rectangle, minimum width=3cm, minimum height=1.3cm, text centered, draw=black, fill=red!30},
    finalprocess/.style={rectangle, minimum width=3cm, minimum height=1.3cm, text centered, draw=black, fill=blue!30},
    groupbox/.style={dashed, draw=black, inner xsep=10pt, inner ysep=10pt},
    arrow/.style={thick,->,>=stealth}
}

\usepackage{siunitx}
\newcommand{\FA}[1]{\textcolor{black}{#1}}

\usepackage{authblk}

\usepackage{natbib}

\usepackage{hyperref}
\usepackage{algorithm}
\usepackage{algpseudocode}

\title{The Capacitated $p$-Location Problem with Territorial Coverage Constraint: Efficient Solution and Case Study}

\author[1]{Felipe Albuquerque\thanks{felipe.albuquerque@univ-avignon.fr}}
\author[1]{Rosa Figueiredo\thanks{rosa.figueiredo@univ-avignon.fr}}
\author[2]{Cyrille Genre-Grandpierre\thanks{cyrille.genre-grandpierre@univ-avignon.fr}}
\affil[1]{LIA - Laboratoire Informatique d'Avignon, Avignon Université}
\affil[2]{UMR 7300 ESPACE, Avignon Université}
\date{}

\begin{document}

\maketitle

\begin{abstract}

This paper studies the Capacitated $p$-Location Problem (C$p$LP) and its extensions incorporating equity considerations. In location science, the $p$-Median problem ($p$MP) is a classical model that selects $p$ facilities from a finite set of candidates to serve a set of customers while minimizing total allocation costs. The C$p$LP, which includes the Capacitated $p$-Median Problem (C$p$MP) and its relaxed variant (C$p$MP$^r$), extends the $p$MP by incorporating capacity constraints on facilities.
We formalize the C$p$LP with Territorial Coverage Constraints (C$p$LP-TC), an extension that enforces equity across spatial units, and generalize it to a multi-scale variant (C$p$LP-MTC) that enforces equity simultaneously across nested spatial scales. 
We present a strengthened Integer Linear Programming (ILP) formulation with valid inequalities that enables the exact solution of medium-sized instances. To tackle larger problems, we adapt the Random Sampling Spatial Voting (RSSV) heuristic, originally proposed for the $p$MP, into a competitive open-source matheuristic that combines a heuristic reduction phase with the strengthened ILP formulation. The resulting method remains flexible, requires minimal parameter tuning, and is accessible to non-specialist users.
Computational experiments on a new open-source benchmark instance set, built from a French regional case study, demonstrate the 
effectiveness of both the exact and heuristic approaches.
Beyond their computational performance,
the results quantify the trade-offs between efficiency and territorial equity under different equity constraints, providing practical insights for equitable location planning.

\end{abstract}

\noindent\textbf{Keywords:} Capacitated $p$-Location, Coverage Constraints, Integer Programming, Matheuristic


\section{Introduction}
\label{sec:Intro}
Location Science is an interdisciplinary field (geography, economics and computer sciences) focusing on selecting optimal sites for human activities to address social, economical and environmental challenges~\citep{Laporte2019}. In location science, mathematical optimization is widely used to derive efficient spatial allocations, with service location problems receiving increasing attention since the 2000s~\citep{celik2020comparative}. 
The classic problem is the $p$-Median Problem ($p$MP), which selects exactly $p$ facilities from a finite set of candidates to serve a set of customers, each assigned to a single facility, while minimizing total allocation costs (distance for example)~\citep{ReVelle1970}. The Capacitated $p$-Median Problem (C$p$MP) and its relaxed assignment variant (C$p$MP$^r$), are two well-known extensions of the $p$-MP. The first introduces facility capacity constraints (each facility can only handle a given amount of demand) and is known to be NP-hard~\citep{GAREY1974}, with early studies by \citet{mulvey1984solving}.
The second (C$p$MP$^r$) allows demand to be split among multiple facilities~\citep{farahani2009facility}.
In this paper, we refer to both variants as the Capacitated $p$-Location Problem (C$p$LP), a unified problem for our study.

The C$p$LP follows an efficiency logic, that leads to allocating resources to areas with the highest demand, in order to minimize overall allocation costs. However, in real-world applications, particularly those related to spatial planning for which the political dimension is important, considerations related to equity emerge in addition to efficiency. Equity is a fundamental principle of spatial planning, linked to the concept of spatial justice ~\citep{kunzmann1998planning}. The rationale is to ensure that everyone, wherever they are, including in sparsely populated or marginal areas, enjoys living conditions that are as similar as possible, particularly in terms of accessibility~\citep{van2022accessibility}. 
Thus, in concrete land-use planning issues, particularly in the fields of education, health, and culture \citep{HAN2023, wolf2021efficiency, northridge2011urban}, it is politically impossible to settle for simple logic, with, on the one hand, efficiency favoring areas of high demand in opposition to the demand for spatial justice and, on the other, equity ensuring fair territorial coverage, but without taking sufficient account of economic considerations related to efficiency. In practice, empirical compromises are found, based on the balance of power between the various players, but without any assurance regarding their optimality. 
The question of how to find optimal locations mixing efficiency and fairness, which is the responsibility of planning authorities, therefore remains largely unanswered, ~\citep{BATTA2014, enayati2020optimal}.

One way to address this issue is to modify traditional efficiency allocation models. 
Certain approaches modify the objective function. For the $p$MP, the $p$-Center problem \citep{suzuki1996p} is a variation that minimizes the maximum assignment distance, while multi-objective formulations are also used to balance efficiency and equity through Pareto optimality \citep{kong2025extended}.
However, these approaches are not designed to ensure a guaranteed minimum level of territorial coverage across predefined spatial units.
Alternatively, 
one can retain an efficiency-oriented objective function while adding explicit equity constraints. This is the approach adopted in this study.
We define new problem formulations by extending the C$p$LP to promote equity through territorial coverage constraints, ensuring, whenever possible, that at least one facility is located within each spatial unit. Considering a predefined territorial partitioning, we introduce the C$p$LP with Territorial Coverage Constraints (C$p$LP-TC). When coverage is required across multiple territorial divisions at different scales, we define the C$p$LP with Multi-Scale Territorial Coverage Constraints (C$p$LP-MTC).

This extension is motivated by the fact that spatial planning is not only technical but also political. Territorial divisions correspond to administrative or governance units, and competition between them implies that equity cannot be assessed solely at an individual level. For example, even if individuals of a population of a given territorial unit have good access to a facility located just outside its unit, from a political point of view this may be unacceptable if their unit of origin  
has no facility, and is therefore considered to be poorly equipped. Thus, location models must account for territorial boundaries to reflect this political dimensions.

To solve location problems, practitioners overwhelmingly rely on commercial tools like ArcGIS~Pro\footnote{\label{note:esri}\url{www.esri.com/en-us/home}} to deal with simple location--allocation scenarios. However, these tools face scalability and flexibility limitations~\citep{CHEN2021}, and they cannot accommodate the territorial constraints mentioned above, as they are restricted to a fixed set of predefined problems. \FA{Alternatively}, for small instances, existing models for the C$p$LP can be adapted and solved exactly through Integer Linear Programming (ILP). In large-scale scenarios however, efficient heuristic methods are required to obtain high-quality solutions within practical time and memory limits.
This paper therefore describes a Random Sampling Spatial Voting (RSSV) matheuristic,
designed to handle all C$p$LP variants considered in this study. The method combines a heuristic component, which reduces instance size, with an exact ILP component. A key advantage of RSSV is its adaptability to different contexts and minimal parameter tuning. This is an essential feature for decision makers, since algorithms with many parameters are often impractical in applied settings~\citep{gwalani2021evaluation}.

In this context, the main contributions of this work are:
\begin{itemize}
    \item Extending the C$p$LP to incorporate spatial equity through territorial coverage constraints, together with a strengthened ILP formulation using valid inequalities.
    \item Developing a competitive open-source RSSV matheuristic for solving the C$p$LP, C$p$LP-TC, and C$p$LP-MTC.
    \item Demonstrating the effectiveness of the RSSV method on a French regional case study through computational experiments.
    \item \FA{Providing a new benchmark instance set with a stepwise procedure for generating additional instances.} 
\end{itemize}



After a literature review in Section~\ref{sec:Lit}, the paper is organized as follows. Section~\ref{sec:ProbDef} defines the Capacitated $p$-Location Problem and its extensions with territorial and multi-scale coverage constraints. Section~\ref{sec:SolMet} presents the ILP formulation, tightening strategy, and RSSV matheuristic. Section~\ref{sec:CompExp} reports computational experiments for a real-world case study from the \textit{Provence-Alpes-Côte d’Azur} (PACA) region in France, focused on cinema services. Additional experiments on literature instances and ArcGIS Pro comparisons are presented in Appendix~\ref{sec:appendix-experiments}. Finally, Section~\ref{sec:Conclusion} summarizes the main findings and future research directions.


\section{Literature review for C$p$LP}
\label{sec:Lit}

In the $p$MP, once the $p$ facilities are selected, each customer is assigned to its nearest facility in polynomial time~\citep{Kariv1979}. As a result, customer demand is never split, and the resulting solutions exhibit a natural clustering structure.
Introducing capacities leads to the C$p$MP, where the assignment step becomes a separate NP-hard problem, known as the Generalized Assignment Problem~\citep{fisher1986multiplier},
increasing problem complexity. Early studies exploited the clustering nature of $p$MP solutions, modeling the C$p$MP as the Capacitated Clustering Problem (CCP)~\citep{mulvey1984solving}. In contrast, the relaxed version, C$p$MP$^r$, allows demand splitting. This, removes the clustering property, and has received comparatively less attention. On the other hand, it exhibits a defining feature of Multiple Allocation Facility Location Problems (MFLPs) \citep{Laporte2019}, in which the number $p$ of selected facilities is not fixed.

Exact solution methods for the C$p$MP include branch-and-bound, column generation, branch-and-price, and cutting planes. These methods build upon the same classic ILP formulation for the problem. 
The author in~\citep{pirkul1987efficient} proposed one of the first branch-and-bound algorithms using a Lagrangian relaxation, solving instances up to 100 nodes (customers and potential facility sites). Subsequent works extended the approach to larger instances: 
\citet{baldacci2002new} 
used set-partitioning formulations for up to 200 nodes, and 
\citet{ceselli2005branch} 
applied branch-and-price with column generation. 
\citet{boccia2008cut} 
employed Fenchel cuts to reduce integrality gaps, solving problems with up to 402 nodes.

Among heuristic methods for the C$p$MP, the classical approach by \citet{mulvey1984solving} alternates between assigning customers to the $p$ selected facilities and updating facility locations. The method uses a greedy regret-based strategy, where the regret of each client is the difference between its distances to the first and second nearest medians, reflecting how much worse the second-best assignment would be. Building on this idea, \citet{koskosidis1992clustering} proposed new initialization methods and regret definitions.
 
\citet{osman1994capacitated} 
introduced randomly generated instances (up to 100 vertices) and a hybrid metaheuristic combining Simulated Annealing and Tabu Search. \citet{Lorena2004} proposed a heuristic combining Lagrangian/surrogate relaxation with column generation, applied to real instances from Brazil (up to 402 nodes), and a large-scale instance with 3038 nodes, based on TSPLIB. \citet{diaz2006hybrid} presented a benchmark for Spain (737 nodes) and hybrid heuristics based on Scatter Search and Path Relinking. Other notable heuristics include GRASP with Adaptive Memory Programming~\citep{AHMADI2005}, VNS~\citep{Fleszar2008}, Clustering Search~\citep{chaves2008clustering}, and hybrid grouping evolutionary algorithms~\citep{landa2012comparative}.

Matheuristics combine the rigor of mathematical programming with heuristic flexibility~\citep{boschetti2024contemporary}. 
\citet{stefanello2015matheuristics} 
proposed IRMA, an Iterated Reduction Matheuristic for the C$p$MP, which iteratively reduces the ILP size through variable elimination heuristics and applies partial optimization to the reduced model. Starting from a randomized primal heuristic solution~\citep{mulvey1984solving}, IRMA solves the smaller ILP and applies post-optimization if needed. Experiments on four literature benchmark sets and new large instances (up to 4461 nodes, following 
\citet{Lorena2004}) 
have demonstrated the efficiency of the IRMA approach. Later, another matheuristic included a selective crossover genetic algorithm with ILP post-processing~\citep{janovsikova2017hybrid}. A large-scale approach by 
\citet{gnagi2021matheuristic} 
handled instances up to 500\,$\mathrm{k}$ nodes by combining global and local optimization phases using ILP on reduced subproblems and $k$-$d$ trees for efficient spatial data management. IRMA remains the best-performing method on literature instances, while 
\citet{gnagi2021matheuristic} 
is currently the most effective method with an accessible code for very large-scale problems. \FA{Most recently, \citet{gjergji2026large} 
proposed a large neighborhood search and a study of hyper-heuristics for the C$p$MP, using an exact MIP solver in the repair phase. Their approaches report lower average GAP values than previous works, but unlike
\citet{gnagi2021matheuristic}, their implementation is not publicly available.}


\FA{Districting Problems~\citep{Kalcsics2019} address a related but distinct question: how to partition a set of basic units (e.g., census tracts, streets, or customer accounts) into contiguous, compact, and balanced territories, often assigning a facility or center to each. This class of problems arises in political redistricting, sales territory design, and service districting, among other applications, and is typically solved via location-allocation methods, set-partitioning approaches, or geometric construction heuristics. While this line of work shares with ours the goal of achieving spatially equitable outcomes, it differs in a fundamental way: districting problems builds the territorial partition itself, as part of the solution, subject to criteria such as balance and compactness. Our work instead takes the territorial divisions as fixed, given in advance as administrative or political boundaries, and pursues equity through territorial coverage constraints.
}

The authors in~\cite{li2011covering} defined a response-time threshold for serving customers, which can be interpreted as a type of coverage constraint in location problems.
In contrast, our study focuses on territorial coverage, ensuring that each spatial unit contains at least one service. To the best of our knowledge, the first work addressing a similar idea of territorial coverage constraints for the $p$MP was presented by \FA{\cite{revelle1989algorithm}, who proposed a two-stage algorithm for facility location in districted regions: a $p$MP is solved independently within each district to evaluate the trade-off between the number of facilities and the resulting weighted distance, and a greedy procedure then allocates the available facilities across districts, ensuring that every district is served by at least one facility. This idea was further formalized by} \cite{church1990regionally}, who introduced additional constraints imposing minimum and maximum limits on the number of facilities to be installed in each region and proposed a Lagrangian relaxation. This work was later extended by \cite{MURRAY1997} to account for capacities. In our study, the C$p$LP with territorial coverage aims to ensure that service allocation maximizes the number of spatial units covered, using additional constraints. Moreover, it is extended to cover multiple types of spatial units simultaneously.

\section{Definitions and mathematical formulation}
\label{sec:ProbDef}

This section defines the Capacitated $p$-Location Problem (C$p$LP), encompassing both the Capacitated $p$-Median Problem (C$p$MP) and its relaxed form (C$p$MP$^r$) (Section~\ref{subsec:C$p$LP}). We then introduce territorial coverage constraints to balance efficiency and equity (Section~\ref{subsec:covConst}), and a multi-scale extension to ensure service coverage across hierarchical territorial divisions (Section~\ref{subsec:Multi_Scale_coverage}).



\subsection{Capacitated $p$-Location Problem}
\label{subsec:C$p$LP}

The $p$-median problem is defined by a set of customers $I$, a set of potential facility locations $J$, and a parameter $p$.
For each pair $(i, j)$ with $i \in I$ and $j \in J$, there is an allocation cost $cost(i, j)$, typically a distance between customer $i$ and location $j$. The objective is to select $p$ locations from $J$ and assign each customer to one of them, so as to minimize the total allocation cost.

In practical applications, each customer $i$ has a demand weight $W_i$, and each location $j$ a capacity limit $R_j$, which corresponds to the Capacitated $p$-Median Problem (C$p$MP). A commonly used ILP formulation for the C$p$MP follows the approach introduced by~\cite{ReVelle1970} and employs two types of decision variables, $y_j$ and $x_{ij}$, defined for each $i \in I$ and $j \in J$. The variable $y_j$ takes the value 1 if a facility is at $j \in J$ and 0 otherwise, whereas $x_{ij}$ takes the value 1 if customer $i \in I$ is assigned to the facility at $j \in J$, and 0 otherwise. The ILP formulation is as follows.
    \begin{align}
        \quad\min & \sum_{i\in I}\sum_{j \in J} cost(i,j) x_{ij} \label{cPMP_obj} \\
        \text{s.t.} \quad &  \sum_{j \in J} x_{ij} = 1, \quad \forall i \in I \label{cPMP_SatifiedCustomer},\\[5pt] 
        & \sum_{j \in J} y_{j} = p,
        \label{cPMP_Exact-p}
        \\[5pt]
        & \sum_{i \in I} W_{i} x_{ij} \leq R_{j} y_{j}, \quad \forall j \in J, 
        \label{cPMP_Capacity}
        \\[5pt]
        & y_{j} \in  \{0,1\},\ x_{ij} \in \{0,1\} ,\quad \forall i \in I, \forall j \in J. 
        \label{cPMP_x_y}
    \end{align}
Constraints (\ref{cPMP_SatifiedCustomer}) ensure that each customer is assigned to exactly one facility, while the total number $p$ of facilities to be installed is specified by constraint (\ref{cPMP_Exact-p}). Constraints (\ref{cPMP_Capacity}) require that each facility's capacity limit is respected, also prohibiting allocations to potential locations  that were not chosen for facilities. Constraints (\ref{cPMP_x_y}) 
define the domains of all variables in the formulation. The objective function (\ref{cPMP_obj}) minimizes the cost of assigning each customer $i$ to a selected facility $j$. 

Depending the service type, we consider the relaxed C$p$MP, denoted C$p$MP$^r$, in which $x_{ij} \in [0,1]$ represents the fraction of demand at $i$ assigned to facility $j$. We denote the Capacitated $p$-Location Problem (C$p$LP) as a unified framework where each instance is defined by $\langle J, I, p, W, R \rangle$, encompassing both C$p$MP and C$p$MP$^r$.

\subsection{Territorial coverage constraints}
\label{subsec:covConst}

Our focus is on applying the C$p$LP to determine service locations within a geographical area $A$.
The set of customers is then denoted as  
$I=\{1, \ldots, m\}$ defining the geographical area $A$ as a set of $m$ distinct spatial units.
The set of locations $J$ represents $|J|$ points within this geographical area.
We also consider a set of $m^s$ geographical subareas $S = \{S_1, S_2, \ldots, S_{m^s}\}$, which is a partition of $I$, i.e. $S_k \cap S_{k'}= \emptyset$ for $k \neq k'$, and $I = \cup_{k=1}^{m^s} S_k$. Let $J(S_k) \subseteq J$ denote the subset of candidate facility locations within the geographical subarea $S_k$.

Next, we define the territorial constraints incorporated in the C$p$LP. These constraints aim to improve territorial coverage by maximizing the number of covered subareas while respecting the facility-opening limit $p$. They require that at least one facility be opened in each subarea, or in as many subareas as possible when $p$ is smaller than the number of subareas $m_s$.
If $p \geq m^s$, the territorial constraint
\begin{equation} \sum_{j \in J(S_k)} y_{j} \geq 1, \quad k \in \{1,2, \ldots, m^s\} \label{cover_all_subar}, \end{equation}
ensures that at least one facility is located in each subarea. On the other hand, if $p \leq m^s$, the constraint
\begin{equation} \sum_{j \in J(S_k)} y_{j} \leq 1, \quad k \in \{1,2, \ldots, m^s\} \label{cover_must_subar}, \end{equation}
ensures that at least $p$ subareas will be covered. 

\textcolor{black}{We refer to the problem incorporating the territorial coverage constraints as C$p$LP-TC. Since C$p$LP is a special case of C$p$LP-TC obtained by setting $m^s = 1$, every instance of C$p$LP can be be reduced in polynomial time to an instance of C$p$LP-TC. Because C$p$LP is NP-hard~\cite{Kariv1979}, C$p$LP-TC is NP-hard. Moreover, the two simple polynomial reductions in Lemmas~\ref{lem:cpmp_tc_hard_p_gt_m} and~\ref{lem:cpmp_tc_hard_p_leq_m} (Appendix~\ref{app:hardness_cpmptc}) show that this NP-hardness holds for both $p > m^s$ and $p \leq m^s$, and therefore for any values of $m^s$ and $p$.
}
\subsection{Multi-Scale Territorial coverage constraints}
\label{subsec:Multi_Scale_coverage}

A territory is typically organized into various hierarchical divisions, each serving a distinct purpose, such as political, electoral, or administrative functions. In service location, coverage constraints may apply at each level of this hierarchy individually, or across combined levels. We define this variant as C$p$LP-MTC, which stands for the 
C$p$LP
with Multi-Scales Territorial Coverage Constraints.

Let $t$ be the number of distinct territorial divisions, with each $l \in \{1, \ldots, t\}$ defining a partition of the geographical area $A$ in $m^{l}$ subareas, denoted as $S^{l} = \{S_{1}^{l}, S_{2}^{l}, \ldots, S_{m^l}^{l}\}$.

\begin{definition}
\label{def:multiscalePartition}
 The collection of these territorial divisions defines a multi-scale partition of $I$, denoted as $P(I)=\{S^l\}_{l\in\{1, \ldots, t\}}$ and satisfying the following properties: 
 \begin{itemize}
\item[(i)] For each $l \in \{1, \ldots, t\}$, $S^l$ is a partition of $I$.
\item[(ii)] For each $l \in \{2, \ldots, t\}$, $k \in \{1, \ldots, m^l\}$,  a unique $q \in \{1, \ldots, m^{l-1}\}$ exists such that $S^l_k \subseteq S^{l-1}_q$.
\end{itemize}
\end{definition}

Property (ii) in the above definition ensures that the partitions in $P(I)$ are hierarchically related: finer partitions (with higher values of $l$) further subdivide the subareas defined by coarser partitions (with lower values of $l$). Note also that $l=1$ has the fewest subareas and $l=t$ the most, with $1 \le m^l \le m$ for all levels $l$.

The multi-scale coverage constraints aim to maximize coverage across the 
$t$ territorial divisions by ensuring as many subareas as possible are covered. Following the approach described in the previous section, where coverage depends on the relationship between $p$ and the number of subareas, this is achieved by including $t$ territorial coverage constraints, given by~(\ref{cover_all_subar}) or~(\ref{cover_must_subar}). However, including just one or two of these constraints is sufficient.

\begin{claim}
Consider $l\in  \{2, \ldots, t\}$ and assume $m^{l-1}\leq p$. The validity of constraints (\ref{cover_all_subar}) for $S^{l-1}$ ensures their validity for all $t\in\{1,\ldots,l-1\}$.
\end{claim}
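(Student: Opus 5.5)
The plan is to show that the constraints (\ref{cover_all_subar}) for the finer level $S^{l-1}$ imply the corresponding constraints at every coarser level $r\in\{1,\ldots,l-1\}$. The statement reuses $t$ as the index; we write it $r$ to avoid a clash with the number of divisions. The argument has three short steps.

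First, use the hierarchy in Definition~\ref{def:multiscalePartition}(ii) to show that every subarea at level $r<l-1$ is a union of subareas at level $l-1$. For each $k\in\{1,\ldots,m^{l-1}\}$ there is a unique $q$ with $S^{l-1}_k\subseteq S^{l-2}_q$, and by transitivity a unique subarea at each coarser level containing it. Since $S^{l-1}$ partitions $I$ (property (i)), every $S^r_q$ is covered by the level-$(l-1)$ subareas contained in it. This union is nonempty, because $S^r_q\neq\emptyset$ as a member of a partition and its elements lie in some $S^{l-1}_k$, which must then sit inside $S^r_q$.

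Second, pass from customers to candidate locations. The candidate set of a subarea is determined by which subarea the location lies in. So if $S^{l-1}_k\subseteq S^r_q$, then $J(S^{l-1}_k)\subseteq J(S^r_q)$. Consequently, for any such $k$,
\[
\sum_{j\in J(S^r_q)} y_j \;\ge\; \sum_{j\in J(S^{l-1}_k)} y_j \;\ge\; 1,
\]
where the first inequality uses $y_j\ge 0$ and the second is constraint (\ref{cover_all_subar}) for $S^{l-1}_k$. This gives (\ref{cover_all_subar}) for every subarea of $S^r$.

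Third, check applicability. Constraint (\ref{cover_all_subar}) is meant to apply at level $r$ only when $m^r\le p$. Since finer levels have at least as many subareas, $m^r\le m^{l-1}\le p$, so the implied constraints are exactly the ones the model would impose at those levels. They are therefore redundant rather than spurious.

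The main obstacle is the monotonicity of $J(\cdot)$ used in the second step, because the paper defines $J(S_k)$ only informally as the candidates "within" $S_k$. I would make explicit that each candidate $j$ belongs to the spatial unit containing it, so that $J(\cdot)$ is inclusion-preserving on unions of units. I would also note that the bound $m^r\le m^{l-1}$ used in the third step needs property (ii) together with nonemptiness of the subareas.
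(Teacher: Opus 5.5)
Your proposal is correct and follows the paper's route. The paper gives no separate proof; it only says the claim follows directly from the nesting property (ii) of Definition~\ref{def:multiscalePartition}, and your steps (every subarea of $S^r$ contains some subarea of $S^{l-1}$, $J(\cdot)$ is inclusion-preserving, and $y_j\ge 0$) are exactly the details that remark leaves implicit, with the monotonicity of $J(\cdot)$ being an assumption the paper makes tacitly through its informal definition of $J(S_k)$.
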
   

\begin{claim}
Consider $l\in  \{1, \ldots, t-1\}$ and assume $p\leq m^l$.  The validity of constraints (\ref{cover_must_subar}) for $S^l$, ensures their validity for $t\in\{l+1,\ldots,t\}$.
\end{claim}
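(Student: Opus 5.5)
The plan is to use the hierarchy in Definition~\ref{def:multiscalePartition}(ii) to show that an upper bound at one level is inherited by every finer level. I read the claim as follows. Fix $l\in\{1,\ldots,t-1\}$ with $p\le m^l$. Assume the constraints (\ref{cover_must_subar}) hold for $S^l$, meaning $\sum_{j\in J(S^l_q)} y_j\le 1$ for every $q\in\{1,\ldots,m^l\}$. The goal is to derive $\sum_{j\in J(S^{l'}_k)} y_j\le 1$ for every level $l'\in\{l+1,\ldots,t\}$ and every $k\in\{1,\ldots,m^{l'}\}$. The symbol $t$ in the range is overloaded; I interpret it as a generic finer level $l'$.

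First, I will iterate property (ii) to show that every subarea $S^{l'}_k$ with $l'>l$ lies inside some subarea $S^l_q$. This is a short induction on $l'-l$. Property (ii) gives $S^{l'}_k\subseteq S^{l'-1}_{q'}$ for a unique $q'$, and the induction hypothesis applied to $S^{l'-1}_{q'}$ finishes the step by transitivity of inclusion.

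Second, I need to pass from inclusion of customer sets to inclusion of candidate sets, that is, $J(S^{l'}_k)\subseteq J(S^l_q)$. This is the only point that needs care, because $J(\cdot)$ is defined informally as the candidate locations "within" a subarea. I will argue that each candidate site $j$ lies in a well-defined region of $A$. Since $S^{l'}_k\subseteq S^l_q$ as subsets of spatial units, a site located in the region of $S^{l'}_k$ also lies in the region of $S^l_q$. If one prefers an explicit convention, each $j$ can be attached to a unique unit of $I$, and the same monotonicity then follows immediately.

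Finally, the variables are nonnegative, $y_j\ge 0$. So summing over the smaller index set gives
$\sum_{j\in J(S^{l'}_k)} y_j\le \sum_{j\in J(S^l_q)} y_j\le 1$.
This establishes (\ref{cover_must_subar}) for every finer level. As a consistency check, the number of finer subareas satisfies $m^{l'}\ge m^l\ge p$, because each partition refines the previous one. Hence $p\le m^{l'}$ still holds, and the regime in which (\ref{cover_must_subar}) is the appropriate constraint is preserved.

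The main obstacle is the second step, the monotonicity of $J(\cdot)$ under inclusion. The rest is direct.
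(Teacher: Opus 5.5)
Your proposal is correct and takes the same route as the paper. The paper gives no separate proof and says only that the claim follows directly from the nesting property (ii) of Definition~\ref{def:multiscalePartition}; your argument (iterating (ii) to get $S^{l'}_k\subseteq S^l_q$, deducing $J(S^{l'}_k)\subseteq J(S^l_q)$, and using $y_j\ge 0$) makes that implication explicit, including the monotonicity of $J(\cdot)$ and the preservation of $p\le m^{l'}$, which the paper leaves implicit.
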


This logic follows directly from Definition~\ref{def:multiscalePartition} of a multi-scale partition and lead to the straightforward proposition stated next. \FA{An illustrative example is shown in Figure~\ref{fig:multiscales_coverage_solutions_example} (Appendix~\ref{sec:appendix-figures}).}
\begin{proposition}
Let $\langle J, I, p, W, R \rangle$ be an instance of the C$p$LP, and let $P(I)$ be a multi-scale partition of $I$ with $m^{\bar{l}-1}\leq p\leq m^{\bar{l}}$, written for $1\leq \bar{l}\leq t$. The validity of territorial coverage constraints (\ref{cover_all_subar}) or (\ref{cover_must_subar}), for each partition $S\in P(I)$, follows from the validity of (\ref{cover_all_subar}) for $S^{\bar{l}-1}$ and (\ref{cover_must_subar}) for $S^{\bar{l}}$.
\end{proposition}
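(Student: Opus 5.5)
The argument reduces to two monotonicity facts about nested partitions. These are exactly the content of the two Claims, so I will prove those first and then combine them.

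\emph{Covering constraints~(\ref{cover_all_subar}) propagate to coarser levels.} Fix a level $l'\le \bar l-1$ and a subarea $S^{l'}_q$. Iterating property~(ii) of Definition~\ref{def:multiscalePartition} from level $\bar l-1$ down to $l'$ shows that every subarea of $S^{\bar l-1}$ lies in exactly one subarea of $S^{l'}$. Since both are partitions of $I$ (property~(i)), $S^{l'}_q$ is a disjoint union of some subareas $S^{\bar l-1}_k$, and the union is nonempty because $S^{l'}_q\neq\emptyset$. Then $J(S^{\bar l-1}_k)\subseteq J(S^{l'}_q)$, so
\[
\sum_{j\in J(S^{l'}_q)} y_j \;\ge\; \sum_{j\in J(S^{\bar l-1}_k)} y_j \;\ge\; 1 ,
\]
which is constraint~(\ref{cover_all_subar}) at level $l'$. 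The assumption $m^{\bar l-1}\le p$ only makes the hypothesis feasible. It also shows that (\ref{cover_all_subar}) is the appropriate constraint at every level $l'$, because $m^{l'}\le m^{\bar l-1}\le p$: refinement can only increase the number of parts.

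\emph{Packing constraints~(\ref{cover_must_subar}) propagate to finer levels.} Fix a level $l'\ge \bar l$ and a subarea $S^{l'}_k$. By iterated property~(ii), $S^{l'}_k\subseteq S^{\bar l}_q$ for a unique $q$, hence $J(S^{l'}_k)\subseteq J(S^{\bar l}_q)$. Because $y\ge 0$,
\[
\sum_{j\in J(S^{l'}_k)} y_j \;\le\; \sum_{j\in J(S^{\bar l}_q)} y_j \;\le\; 1 .
\]
Here too $m^{l'}\ge m^{\bar l}\ge p$, so (\ref{cover_must_subar}) is the appropriate constraint at these levels.

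To conclude, every level $l$ satisfies either $l\le \bar l-1$ or $l\ge \bar l$, so the two steps together cover all of $P(I)$. The boundary case $\bar l=1$ has no level $\bar l-1$: there is then no coarser level, and only the second step is needed. The case $\bar l = t$ with $p>m^t$ is handled by the first step applied at level $t$.

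The main obstacle is not analytic. It is the bookkeeping that the sets $J(S)$ are monotone under inclusion of subareas. The paper defines $J(S_k)$ as the candidate locations ``within'' $S_k$, so the argument rests on this assignment being induced by an assignment of each $j\in J$ to the customer unit of $I$ containing it. I would state this assumption explicitly. With it, nestedness of subareas gives nestedness of the $J(\cdot)$ sets, and both inequalities follow immediately.
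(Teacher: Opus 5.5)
Your proof is correct and is essentially the paper's argument: the paper obtains the proposition directly from Definition~\ref{def:multiscalePartition} by combining its two Claims. Covering constraints~(\ref{cover_all_subar}) at level $\bar l-1$ propagate to all coarser levels, and packing constraints~(\ref{cover_must_subar}) at level $\bar l$ propagate to all finer levels; this is exactly your decomposition, with the details filled in. Your explicit assumption that $J(\cdot)$ is monotone under inclusion of subareas makes precise what the paper leaves implicit. The only slip is cosmetic: the case $p>m^t$ lies outside the stated hypothesis and corresponds to taking $\bar l=t+1$ rather than $\bar l=t$.
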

    



\section{Solution methods}
\label{sec:SolMet}

In this section, we present the methods developed to solve the previously introduced problems. As stated in the introduction, our main objective is to design approaches that efficiently handle large-scale instances, require minimal parameter tuning, and easily accommodate additional constraints.
For exact solutions, we rely on the mathematical formulations described earlier, strengthened with valid inequalities and formulation improvements (Section~\ref{subsec:form_strength}).
Complementing these exact methods, we introduce a RSSV matheuristic (Section~\ref{subsec:rssv}) applicable to all problem variants.




\subsection{Tightening the Formulation}
\label{subsec:form_strength}

Here we describe four constraint classes used to strengthen the formulations.
\paragraph{Disaggregated constraints:} As suggested in the literature \citep{Marin2019}, when aggregated constraints~(\ref{cPMP_Capacity}) are present in location problems, an alternative strategy is to combine them with the inclusion of the following disaggregated ones:
\begin{equation}
x_{ij} \leq y_j, \quad \forall i \in I, \forall j \in J.
\label{eq:valid_ineq_disagg}
\end{equation}

These inequalities enforce the constraint that a customer can only be assigned to an open facility. To avoid overloading the model with an excessive number of constraints, it is advisable to restrict its inclusion to the facility \( j^* \) that offers the minimum supply cost for each customer \( i \in I \), i.e., $
j^* = \arg\min_{j \in J} \{ \text{cost}(i,j) : \text{cost}(i,j) > 0 \}.
$
As argued by~\cite{Marin2019}, this selective use of valid inequalities 
strengthens the formulation, yielding tighter lower bounds.

\paragraph{Lifted $p$-cover constraints:}
Introduced in~\citep{Aardal95}, $p$-cover inequalities are classical ones for a C$p$LP. It states that if we can identify a set of \( p \) locations whose total capacity is strictly less than the total demand, then it is impossible for all \( p \) of these locations to be selected in a feasible solution. Formally, let \( J' \subseteq J \) be a set of locations such that \( |J'| = p \) and $\sum_{j \in J'} R_{j} < \sum_{i \in I} W_{i},$ then the following valid inequality holds:
\begin{equation}
\sum_{j \in J'} y_{j} \leq p - 1.
\label{eq:valid_ineq_lift_pcover}
\end{equation}

When assuming territorial coverage constraints~\eqref{cover_must_subar} in the C$p$LP-TC problem defined for a partition $S$ of $I$, these constraints can be lifted as follows.
Remember that when \( p \leq m^S \), each subarea $S_k\in S$ must be covered by at least one selected facility. The $p$-cover constraint~\eqref{eq:valid_ineq_lift_pcover} can be strengthened through a lifting strategy that takes the territorial coverage constraint into account.

Consider a subset \( J' \subseteq J \) defining a constraint~\eqref{eq:valid_ineq_lift_pcover}, i.e. such that:
 \( |J'| = p \) and 
\( \sum_{j \in J'} R_{j} < \sum_{i \in I} W_{i} \). Additionally, assume that each \( j \in J' \) belongs to a distinct subarea denoted $S_{k(j)}$.  For each $j\in J'$, let \( \bar{J}_{k(j)} \subset J(S_{k(j)}) \) denote a subset including more locations from the same subarea as $j$ such that $R_j \geq R_{j'}$, for each $j'\in \bar{J}_{k(j)}$. 
Clearly, the sum of the maximum capacities from each one of these \( p \) subareas is less than or equal to the total demand $\sum_{i \in C} W_i$. Then, even with the expanded set \(\cup_{j\in J'} \bar{J}_{k(j)} \), the demand cannot be satisfied, i.e.,
\begin{equation}
\sum_{j \in J'} \sum_{j' \in \bar{J}_{k(j)}} y_{j'} \leq p - 1.
\label{eq:lifted_coverineq}
\end{equation}

\paragraph{Location upper bound constraints:}
We now consider the C$p$LP-TC problem with \( p \geq m^S \), i.e. when $p$ is sufficient to cover all subareas. Under the coverage constraint~\eqref{cover_all_subar}, we can derive an upper bound on the number of facilities in each subarea, based on the surplus \( p - m^S \), which represents the additional locations available after assigning one to each subarea. For each \( S_k \in S = \{S_1, S_2, \ldots, S_{m^S}\} \), assuming $|J(S_k)| > p - m^S + 1$,
\begin{equation}
\sum_{j \in J(S_k)} y_{j} \leq p - m^S + 1.
\label{eq:valid_ineq_upper_limit_zone}
\end{equation}

This reasoning extends to any subset of subareas \( K' \subset S \). If \( K' \) satisfies
\(
\sum_{S_k \in K'} |J(S_k)| > p - m^S + |K'|,
\)
the total number of facilities across the subareas in \( K' \) is then bounded by:
\begin{equation}
\sum_{S_k \in K'} \sum_{j \in J(S_k)} y_{j} \leq  p - m^S + |K'|.
\label{eq:valid_ineq_upper_limit_zone_set}
\end{equation}

In particular, when the subset \( K' \) contains exactly \( m^S - 1 \) distinct subareas, inequality~\eqref{eq:valid_ineq_upper_limit_zone_set} reduces to inequality~\eqref{eq:lifted_coverineq}.

\paragraph{Distance constraints:} Consider a maximum distance limit \( \mathrm{D} \), which restricts customer assignments to facilities located within a predefined radius. This limitation can be easily added to the previous formulations, by writing the customer satisfaction constraint~\eqref{cPMP_SatifiedCustomer} as,
\begin{equation}
\sum_{j \in J : d_{ij} \leq \mathrm{D}} x_{ij} = 1, \quad \forall i \in I.
\label{cPMP_SatifiedCustomer_Dmax}
\end{equation}

This modification is not a valid inequality, as it restricts rather than strengthens the feasible region. If no facility is available within radius \( \mathrm{D} \) for a customer \( i \), the instance becomes infeasible, and an inappropriate choice of 
 \( \mathrm{D} \) may also exclude optimal or promising solutions. Conversely, a properly selected \( \mathrm{D} \) can greatly reduce the feasible search space, improving computational efficiency by pruning poor solutions. Many heuristics use this strategy in location problems, and the next section describes how our matheuristic defines \( \mathrm{D} \).

\subsection{RSSV Matheuristic}
\label{subsec:rssv}

Our RSSV matheuristic consists of six steps. Five of them follow the approach proposed by Mu \textit{et al.}~\citep{Mu2020}: Random sampling, Sub-problem solution, Spatial voting, Filtering, and Final problem solution. These are followed by a Post-optimization step. \FA{Overall, our heuristic combines ideas from Mu \textit{et al.}~\citep{Mu2020} with elements of the IRMA by Stefanello \textit{et al.}~\citep{stefanello2015matheuristics}.} Figure~\ref{fig:rssv_flowchart} in the Appendix shows the overall heuristic approach. A detailed explanation of each RSSV step is presented below.

\paragraph{ \textbf{Random Sampling:}}
\label{subsubsec:RSSV-sub_cplp_creation}
To handle large-scale instances, the original RSSV applies a random sampling to reduce both the set of customers and the set of locations, generating multiple smaller sub-problems to explore the solution space and identify promising candidate locations. 
Unlike \cite{Mu2020}, we retain the full customer set $I$ in the sub-problems. Each sub-problem is then treated as a $p$MP, incorporating territorial coverage when needed. Each is defined as \( \langle J_u, I, p, W, R \rangle \), where \( J_u \subseteq J \) is the subset of candidate locations in sub-problem \( u \in \{1, \dots, M\} \), and \( M \) is the number of sub-problems. We set \( |J_u|=n_{\text{cand}} \geq p \), for all \( u \in \{1, \dots, M\} \), and generate \( M \) sub-problems by randomly selecting \( n_{\text{cand}} \) locations from \( J \). Following the original work, \( M = \frac{5|J|}{n_{\text{cand}}} \), but now we limit it to \( M \leq 20 \), since we solve them in parallel on 20 threads. In our experiments, different values of \( n_{\text{cand}} \) are used.

\paragraph{ \textbf{Sub-Problem Solving:}}
\label{subsubsec:RSSV-sub_plp_solving}
Each sub-problem from the previous step is solved using the simple TB heuristic~\citep{teitz1968heuristic}. The heuristic starts from a randomly generated initial solution and applies a steepest descent search, iteratively improving the solution by replacing one selected facility location with a better alternative. The process stops when no single swap improves the solution. For the C$p$LP with coverage constraints, both the initial solution and the search respect these constraints, allowing swaps only if they maintain the required subarea coverage. A time limit of 5\% of the total instance limit is imposed for each sub-problem. Since sub-problems are solved in parallel, the total time remains within 5\% of the overall limit.


\paragraph{ \textbf{Spatial Voting:}}
\label{subsubsec:RSSV-voting}
Once all sub-problems have been solved, this step identifies a set of the most promising locations for the overall instance. To do so, each location is assigned a score based on two criteria: its frequency of appearance in sub-problem solutions, and its proximity to other frequently selected locations. Locations with higher voting scores are considered more likely to be part of the set of best solutions. The goal is to prioritize locations that are both commonly chosen and spatially close to other high-frequency ones, thereby reinforcing spatial consistency in the final selection.

Let $f_u = \{f_{u,1}, f_{u,2}, ..., f_{u,p}\}$ represent the set of $p$ locations selected in the solution of the $u^\text{th}$ sub-problem $\langle J_u, I, p, W, R \rangle$. Each candidate location $j \in J$ in the original problem is influenced by each location $f_{u,g} \in f_u$, for $u \in \{1,2,...,M\}$ and $g \in \{1,2,...,p\}$, with a score defined as:
\begin{equation*}
\delta_{jug}  = 
\begin{cases} 
e^{-\frac{\text{dist}^2(f_{u,g}, j)}{(\kappa h)^2}}, & \text{if } 0 < \text{dist}(f_{u,g}, j) \leq \kappa h,\\
0, & \text{otherwise.}
\end{cases}
\end{equation*}

The value $\text{dist}(f_{u,g}, j)$ is the distance between locations $f_{u,g}$ and $j$.
\citet{Mu2020} 
use $\kappa = 1$ and compute $h$ using Silverman's rule of thumb~\citep{silverman1998density}, which determines the bandwidth controlling spatial influence. It is defined as $
h = \left( \frac{4 \sigma^5}{3 |N|} \right)^{0.2},
$
where $\sigma$ is the standard deviation of the distances in the instance. In Section~\ref{sec:Analysis_Params_Strengthening}, we analyze how the value of $\kappa$ relates to the instance being solved.
Finally, the voting score function $V: J \to \mathbb{R}_+$, which represents the accumulated score for each location $j \in J$, is defined as
$
V(j) = \sum_{u=1}^{M} \sum_{g=1}^{p} \delta_{jug}.
$

\paragraph{ \textbf{Filtering:}}
\label{subsubsec:RSSV-filtering}
After computing the voting weights for all \( |J| \) locations, they are sorted in descending order, and the top \( n_{\text{cand}} \)  locations are retained as candidates for the next phase. We set a minimum value of \( n_{\text{cand}} \geq 2p \), ensuring a sufficiently large and diverse sample for the subsequent optimization step. When fewer than  \( n_{\text{cand}} \)  locations have strictly positive voting weights, additional sites are selected based on the largest capacities to increase the chances of obtaining a feasible solution.

\paragraph{ \textbf{Final Problem Solving:}}
\label{subsubsec:RSSV-finalproblem_solving}
Let \( J_{n_{\text{cand}}} \subseteq J \) denote the set of the \( n_{\text{cand}} \) most voted locations selected in the previous step. In this work, we solve the reduced problem instance \( \langle J_{n_{\text{cand}}}, I, p, W, R \rangle \) using the ILP formulation described in Section~\ref{sec:ProbDef} for the problem being solved, enriched with the valid inequalities presented in the previous section. Remember that the distance constraints~\eqref{cPMP_SatifiedCustomer_Dmax} require the definition of a maximum distance value \( \mathrm{D} \). We defined \( \mathrm{D} \) as the minimum of the maximum distances observed in the sub-problem solutions obtained during the second step of the RSSV heuristic.

In Section~\ref{sec:Analysis_Params_Strengthening}, we discuss how to efficiently combine the inequalities presented in  Section~\ref{subsec:form_strength} when solving the C\(p\)LP, followed by the impact of our choice of \( \mathrm{D} \) on the quality of the solution (in Section~\ref{sec:CompExp_RSSV_paca_coverage}).
The set of locations in the final solution obtained is denoted as \( f^{*} = \{f^{*}_{1}, f^{*}_{2}, ..., f^{*}_{p}\} \).

\paragraph{ \textbf{Post-Optimization:}}
If the RSSV Final Problem Solving step finishes before the time limit, the remaining time is used for a post-optimization step improving the best-found $f^{*}$. \textcolor{black}{The idea is to locally refine the neighborhood of $f^{*}$, exploring solutions close to it. This enables a chained effect of improvements, where a local modification can trigger further beneficial changes in related parts of the solution, leading to a broader exploration of the surrounding solution space.} A reduced candidate set $J'$ is built and explored with an ILP formulation.

We start with $J' = f^{*}$. For each location $f_g \in f^{*}$, we add the closest neighbor $j' \in J \setminus J'$, ensuring no neighbor is repeated, and forming $|J'| = 2p$. The problem $\langle J', I, p, W, R \rangle$ is then re-solved with ILP, using \( \mathrm{D} \) equal to the maximum customer–facility distance in the current $f^{*}$ solution.
If a better solution is found and time allows, the process restarts with the new $f^{*}$. Otherwise, the neighborhood is progressively expanded by including the next closest neighbors, until improvement is obtained. Possibly, if $|J'| = |J|$, a final ILP run is executed before ending the phase (see Algorithm~\ref{alg:post_optimization} in Appendix~\ref{sec:appendix-figures}).
\FA{The inclusion of this step is mainly motivated by instances with small subproblem sizes, for which the heuristic typically terminates significantly before the proposed time limit, leaving extra time available to further improve the solution.}



\section{Computational experiments}
\label{sec:CompExp}

The computational experiments address two main questions:

\begin{enumerate}
    \item Is the RSSV a computationally efficient approach for solving the variations of the C$p$LP?
    \item How do territorial coverage constraints in C$p$MP$^r$ affect the experiments and the resulting solutions in a real-world scenario?
\end{enumerate}

Section~\ref{sec:CompExp-Instances} describes the benchmark instances, including the construction of the PACA dataset, based on the French region \textit{Provence-Alpes-Côte d’Azur} (PACA). {\color{black}Experiments on literature instances (Section~\ref{sec:Exp_Literature-vs-RSSV}) and comparisons with ArcGIS Pro (Section~\ref{sec:Exp_ArcGIS-vs-MIP-vs-RSSV}) are detailed in~\ref{sec:appendix-experiments}}. 
Section~\ref{sec:Analysis_Params_Strengthening} analyzes the formulation-strengthening techniques and the impact of RSSV parameters ($n_{cand}$ and $\kappa$), leading to the best RSSV configuration for solving problems with territorial and multi-scale coverage constraints (Section~\ref{sec:CompExp_RSSV_paca_coverage}). Finally, Section~\ref{sec:Analysis_Solutions_PACA} presents an analysis of solutions for the PACA case study.

In the following experiments, the percentage gap (Gap [\%]) is calculated as
\(
\left( \frac{Z - Z^*}{Z^*} \right) \times 100,
\)
where \( Z \) is the value of the evaluated solution and \( Z^* \) the best known solution. 
GapILP [\%] is the gap reported by CPLEX at the time limit. 
All experiments, ran on an Intel Xeon E5640 CPU (2.67 GHz) with 128/256~GB RAM on Ubuntu 18.04.5 LTS. ILP and heuristic implementations were coded in C++, with formulations solved by IBM CPLEX 22.1.1. The RSSV results report the best of five runs, with the RSSV parameters summarized in Table~\ref{tab:rssv_params} and discussed in Section~\ref{sec:Analysis_Params_Strengthening}.
{\color{black}The RSSV method, the instance set, and the instance generation procedure are publicly available.\footnote{The github link will be made available here once the paper is accepted for publication.}.}

\subsection{Instances}
\label{sec:CompExp-Instances}


\subsubsection{Literature dataset:}

We evaluated the proposed matheuristic on three benchmark datasets widely used in the literature for C$p$MP. Although our objective is not to propose a better method for C$p$MP, we aim to verify that our method remains competitive on the problem without territorial coverage constraints.
The first dataset consists of five instances (p3038\_600 to p3038\_1000) proposed by 
\citet{lorena2003local},
derived from the TSPLIB
dataset and containing 3038 nodes,
with $p$ values ranging from 600 to 1000. The second dataset (fnl4461\_20 to fnl4461\_1000), introduced by 
\citet{stefanello2015matheuristics},
is also derived from TSPLIB and includes 4461 nodes, with $p$ varying between 20 and 1000. The third dataset, known as SJC, was proposed by \citet{Lorena2004} and is based on real-world data from Brazil. It contains up to 402 nodes and six instances with varying numbers of candidate locations and values of $p$, as detailed in Table~\ref{tab:rssv_cpmp_SJC}. This dataset was also used by \citet{stefanello2015matheuristics} and here serves to illustrate the behavior of our method on smaller problem sizes.
\textcolor{black}{Best known solutions for all instances are reported by \citet{stefanello2015matheuristics}.
\citet{gnagi2021matheuristic} provide an open-source implementation of their heuristic, along with their own implementation of IRMA \citep{stefanello2015matheuristics}. 
We compare against the better of the two on the smaller SJC instances. On the larger instances, our method, by adjusting only the $n_{\text{cand}}$ parameter, achieves better gaps than both methods in \citet{gnagi2021matheuristic}.
We further compared the 3038 and 4461 nodes instances against the more recent work of 
\citet{gjergji2026large}, where our method was competitive but did not achieve the best results.}

\textcolor{black}{To summarize, our method is competitive with recent C$p$MP heuristics \citep{gnagi2021matheuristic,gjergji2026large}: less effective on smaller instances, but competitive on larger ones by adjusting only $n_{\text{cand}}$, answering research question 1 (see~\ref{sec:Exp_Literature-vs-RSSV}).}



\subsubsection{PACA dataset:}

This study introduces two new sets of real-world instances, \texttt{paca5282} and \texttt{paca2641}, representing different spatial resolutions of the \textit{Provence-Alpes-Côte d’Azur} (PACA) region in southeastern France. The region has about 5 million inhabitants and exhibits strong contrasts in population density, from the major coastal cities and the urbanized littoral to sparsely populated rural and mountain areas. Both instances cover the contiguous continental part of the PACA region and are structured around three levels of territorial division: \emph{communes}, \emph{cantons}, and \textit{établissements publics de coopération intercommunale} (\emph{EPCIs}). \emph{Communes} and \emph{cantons} are standard administrative units, while \emph{EPCIs} are cooperative governance bodies grouping several communes for shared service provision and planning.

The PACA spatial structure was generated from a regular 2~km grid (7898 points). Demand at each grid point was estimated from the INSEE\footnote{\label{note:insee}\url{www.insee.fr/en/accueil}} 2021 gridded population dataset, which provides population counts at 1~km$^2$ resolution. For each 2~km grid cell, we aggregated surrounding 1~km$^2$ population values weighted by Euclidean distance to the cell center. Points with zero population were excluded (–2622), and six additional points (+6) were inserted to ensure that each \emph{commune}, \emph{canton}, and \emph{EPCI} contained at least one representative (with a demand value of one). The resulting 5282 grid points were used as both demand nodes and candidate facility sites, defining the \texttt{paca5282} instance. The travel times, distances between all pairs of nodes, were computed with the Open Source Routing Machine\footnote{\url{project-osrm.org}} (OSRM) using OpenStreetMap data, generating the final symmetric origin–destination matrices.

As noted in Section~\ref{sec:Intro}, the service analyzed is cinema. Due to the characteristics of this service, for the PACA instances we concentrate the experiments on the C$p$MP$^r$ variations. Although privately managed, cinemas in France operate under public concession and play a key cultural role. Using data from the 2023 \emph{Base Permanente des Équipements}\footref{note:insee} (BPE), we identified 192 cinemas in the PACA region and estimated their service capacities. Voronoi diagrams centered on each cinema were used to assign population demand within each cell. The resulting demand values were grouped into five capacity intervals, from which facility capacities were sampled and assigned to 2~km grid sites, preserving the observed spatial heterogeneity. The total number of cinemas (192) also served as the reference number of facilities $p$ in the instance. We used five values of \(p\in \{134, 173, 192, 211, 250\}\), representing \(\pm 10\%\) and \(\pm 30\%\) variations of the real number of cinemas identified.

The \texttt{paca5282} dataset thus contains 5282 valid points within the PACA region, covering 959 \emph{communes}, 192 \emph{cantons}, and 51 \emph{EPCIs}. To create the reduced \texttt{paca2641} dataset, one point was selected from each pair of neighboring grid cells while maintaining representation across all three territorial levels. 
The two PACA datasets are defined over these three territorial divisions. Territorial coverage instances treat each division independently, while multi-scale ones simultaneously consider \emph{communes} and \emph{EPCIs}, following Definition~\ref{def:multiscalePartition}. Combining different values of $p$, coverage types, and territorial layers generated 25 instances per dataset (\texttt{paca5282} and \texttt{paca2641}), providing realistic and diverse benchmarks for evaluating our methods with and without territorial coverage constraints. Figure~\ref{fig:paca2641_population_distribution} (\ref{sec:appendix-figures}) shows \texttt{paca2641} nodes and population distribution.
A natural question is how RSSV compares to commercial tools. We include a comparison with ArcGIS Pro in~\ref{sec:Exp_ArcGIS-vs-MIP-vs-RSSV}. The experiments on these instances will address research questions 1 and 2, evaluating RSSV effectiveness for the C$p$LP, C$p$LP-TC and C$p$LP-MTC.



\subsection{Analysis of Formulation Strengthening and Parameters}
\label{sec:Analysis_Params_Strengthening}

The following analyses focus on identifying the most effective RSSV configurations for the PACA instances.
The analyses comprise two complementary studies: first, assessing the influence of key parameters ($\kappa$ and $n_{\text{cand}}$) on solution quality (Section~\ref{sec:Analysis_Params_Strengthening-params}), and second, evaluating the effect of the formulation strengthening strategies introduced in Section~\ref{subsec:form_strength} (Section~\ref{sec:Analysis_Params_Strengthening-cuts}).

\subsubsection{Study of Formulation Tightening.}
\label{sec:Analysis_Params_Strengthening-cuts}

We evaluated the impact of the formulation-strengthening techniques described in Section~\ref{subsec:form_strength} on the \texttt{paca2641} instance, considering the variants C$p$MP$^r$, C$p$MP$^r$-TC, and C$p$MP$^r$-MTC. Three classes of valid inequalities were tested,

\begin{itemize}
    \item \textit{DisaggCuts} (Eq. \eqref{eq:valid_ineq_disagg}): For each facility $j \in J$ only to its closest weighted customer $i \in I$.
    
    \item Lifted $p$-cover inequalities (Eq. \eqref{eq:valid_ineq_lift_pcover}): We generated these by randomly selecting $p$ candidate locations and applying the lifting strategy. To balance coverage and model size, we included at most 250 such inequalities.
    
    \item Location upper bound constraints (Eq. \eqref{eq:valid_ineq_upper_limit_zone_set}): Applied to subsets $K' \subseteq K$ where $|K'| \in \{1, m^s - 1\}$, enforcing limits on small and large subarea groupings.
\end{itemize}

The combination of lifted $p$-cover and location upper-bound constraints is referred to as \textit{PropCuts}, while all three classes together form \textit{AllCuts}. Tables~\ref{tab:ilp_strength_paca2641} and~\ref{tab:ilp_strength_paca2641_cover} report their effects on solution quality for C$p$MP$^r$, C$p$MP$^r$-TC, and C$p$MP$^r$-MTC. Each table compares the baseline ILP formulation (Section~\ref{sec:ProbDef}) with strengthened versions using \textit{DisaggCuts}, \textit{PropCuts}, and \textit{AllCuts}. Reported metrics include Gap [\%] and GapILP [\%], while the “\textbf{\#Best}” row indicates how many instances achieved the lowest gap. All tests were performed with a 3600 seconds time limit, and the columns ``\textbf{ILP + \(<\text{cut}>\)}'' specify the additional constraints were applied.

The results for C$p$MP$^r$ (Table~\ref{tab:ilp_strength_paca2641}) and for the territorial coverage variants (Table~\ref{tab:ilp_strength_paca2641_cover}) show that \textit{DisaggCuts}, proposed in the literature, significantly improved performance on the \texttt{paca2641} instance, resulting in a lower average  Gap [\%] compared to the pure ILP formulation. This improvement is especially noticeable for problems without territorial coverage constraints.

Focusing on the problems with territorial coverage (Table~\ref{tab:ilp_strength_paca2641_cover}), the \textit{PropCuts} improved the ILP performance in some cases, particularly when the number of subareas to cover approaches $p$, as observed for the \emph{canton}-level coverage. The best overall results were obtained when combining all three classes of inequalities (\textit{AllCuts}), which achieve the lowest average gaps (Gap [\%] and GapILP [\%]) and the highest number of best solutions (\#Best).

\FA{Notably, while DisaggCuts exhibit the strongest individual performance, the combined AllCuts strategy consistently achieves the best overall results across the full benchmark. In particular, it yields lower optimality gaps for the majority of instances and, for some cases, reduces the gap dramatically (e.g., from approximately 45\% to around 3\%).}
Therefore, we recommend incorporating these additional constraints in order to tighten the formulation and improve the quality of the solution. For \texttt{paca2641}, \textit{DisaggCuts} were used for problems without territorial coverage, while \textit{AllCuts} were adopted for territorial and multiscale variants. The same configuration was applied to the \texttt{paca5282} instance, with one-hour time limit.

\begin{table}[htbp]
\centering
\caption{Effect of Formulation Strengthening on C$p$MP$^r$ for \texttt{paca2641}}
\label{tab:ilp_strength_paca2641}
{\tiny

\begin{tabular}{
    l 
    S[table-format=4.0]
    S[table-format=4.0]
    S[table-format=3.2, detect-weight, tight-spacing=true]
    S[table-format=3.2, detect-weight, tight-spacing=true]
    S[table-format=3.2, detect-weight, tight-spacing=true]
    S[table-format=3.2, detect-weight, tight-spacing=true]
}
\toprule
\textbf{Instance} & \(\boldsymbol{|J| = |I|}\) & \(\boldsymbol{p}\) &
\multicolumn{2}{c}{\textbf{ILP}} &
\multicolumn{2}{c}{\textbf{ILP + \textit{DisaggCuts}}} \\
\cmidrule(lr){4-5} \cmidrule(lr){6-7}
& & & {Gap [\%]} & {GapILP [\%]} & {Gap [\%]} & {GapILP [\%]} \\
\hline
\texttt{paca2641} & 2641 & 134 & 95.83 & 51.96 & 153.98 & 61.95 \\
\texttt{paca2641} & 2641 & 173 & 15.12 & 19.60 & \bfseries 9.42 & 11.89 \\
\texttt{paca2641} & 2641 & 192 & 18.88 & 18.91 & \bfseries 1.10 & 3.89 \\
\texttt{paca2641} & 2641 & 211 & 16.32 & 15.30 & \bfseries 1.42 & 3.45 \\
\texttt{paca2641} & 2641 & 250 & 18.05 & 16.32 & \bfseries 2.58 & 3.54 \\
\hline
\textbf{Average} & & & \bfseries 32.78 & 24.42 &  33.70 & \bfseries 16.94 \\
\textbf{\#Best}  & & & 1     & 1     & \bfseries 4               & \bfseries 4      \\
\bottomrule
\end{tabular}

}
\end{table}

\begin{table}[htbp]
\centering
\caption{Effect of Formulation Strengthening on C$p$MP$^r$-TC and C$p$MP$^r$-MTC for \texttt{paca2641}}
\label{tab:ilp_strength_paca2641_cover}
{\tiny
\resizebox{\textwidth}{!}{%

\begin{tabular}{
    l  
    S[table-format=4.0]  
    S[table-format=4.0]  
    l  
    l 
    S[table-format=2.2, detect-weight]  
    S[table-format=2.2, detect-weight]  
    S[table-format=2.2, detect-weight]  
    S[table-format=2.2, detect-weight]  
    S[table-format=2.2, detect-weight]  
    S[table-format=2.2, detect-weight]  
    S[table-format=2.2, detect-weight]  
    S[table-format=2.2, detect-weight]  
}
\toprule
\textbf{Instance} & \(\boldsymbol{|J| = |I|}\) & \(\boldsymbol{p}\) & \textbf{Territorial Div.} & \#\textbf{units} &
\multicolumn{2}{c}{\textbf{ILP}} &
\multicolumn{2}{c}{\textbf{ILP + \textit{DisaggCuts}}} &
\multicolumn{2}{c}{\textbf{ILP + \textit{PropCuts}}} &
\multicolumn{2}{c}{\textbf{ILP + \textit{AllCuts}}} \\
\cmidrule(lr){6-7} \cmidrule(lr){8-9} \cmidrule(lr){10-11} \cmidrule(lr){12-13}
& & & & & {Gap [\%]} & {GapILP [\%]} & {Gap [\%]} & {GapILP [\%]} & {Gap [\%]} & {GapILP [\%]} & {Gap [\%]} & {GapILP [\%]} \\
\hline
 \texttt{paca2641} & 2641 & 134 & EPCIs    &   51 & 69.10 & 43.49 & 42.34 & 32.03 & \bfseries 30.19 & 28.31 & 44.23 & 32.92 \\
 \texttt{paca2641} & 2641 & 173 & EPCIs    &   51 & 20.41 & 20.87 & 4.48  & 7.63  &  \bfseries 3.99 & 7.27 & 37.79 & 29.44 \\
 \texttt{paca2641} & 2641 & 192 & EPCIs    &   51 & \bfseries 18.53 & 18.14 & 21.01 & 19.13 & 19.24 & 19.22 & 20.56 & 19.30 \\
 \texttt{paca2641} & 2641 & 211 & EPCIs    &   51 & 22.66 & 20.31 & 3.37  & 5.34  & 14.86 & 8.21 & \bfseries 1.41 & 3.27 \\
 \texttt{paca2641} & 2641 & 250 & EPCIs    &   51 & 9.79  & 5.02  & \bfseries 0.64  & 1.65  & 11.99 & 12.33 & 3.20 & 4.21 \\
\hline
 \texttt{paca2641} & 2641 & 134 & \emph{cantons}  & 192 & 32.10 & 25.29 & \bfseries 2.39  & 3.85  & 8.37 & 9.41 & 2.58 & 3.87 \\
 \texttt{paca2641} & 2641 & 173 & \emph{cantons}  & 192 & \bfseries 1.33  & 1.83 & 2.25  & 2.57  & 1.90 & 2.35 & 1.93 & 2.26 \\
 \texttt{paca2641} & 2641 & 192 & \emph{cantons}  & 192 & 3.28  & 3.41 & 2.16  & 2.34  & \bfseries 0.76 & 1.05 & 7.98 & 7.61 \\
 \texttt{paca2641} & 2641 & 211 & \emph{cantons}  & 192 & 7.61  & 7.55 & 6.69  & 6.73  &\bfseries 0.47 & 0.97 & 0.76 & 3.08 \\
 \texttt{paca2641} & 2641 & 250 & \emph{cantons}  & 192 & 2.52  & 2.88 & 0.54  & 0.83  &   0.49 & 0.87 & \bfseries 0.40 & 0.84 \\
\hline
 \texttt{paca2641} & 2641 & 134 & \emph{communes} & 959 & 25.07 & 24.45 & \bfseries 3.45  & 5.68  & 8.56 & 10.87 & 28.02 & 23.77 \\
 \texttt{paca2641} & 2641 & 173 & \emph{communes} & 959 & \bfseries 16.51 & 16.23 & 35.39 & 26.97 & 52.55 & 35.48 & 29.28 & 23.74 \\
 \texttt{paca2641} & 2641 & 192 & \emph{communes} & 959 & 9.25  & 9.61 & 45.69 & 32.09 & 11.61 & 11.78 & \bfseries 2.91 & 3.75 \\
 \texttt{paca2641} & 2641 & 211 & \emph{communes} & 959 & 60.46 & 38.15 & 3.26  & 4.29  & 51.85 & 34.63 & \bfseries 1.21 & 2.20 \\
 \texttt{paca2641} & 2641 & 250 & \emph{communes} & 959 & 1.11  & 1.56 & 1.01  & 1.53  & 47.37 & 32.33 & \bfseries 0.69 & 1.10 \\
\hline
 \texttt{paca2641} & 2641 & 134 & EPCIs/\emph{communes} & 51/959 & 18.64 & 18.49 & 18.60 & 17.82 & 44.98 & 32.97 & \bfseries 2.33 &   4.74 \\
 \texttt{paca2641} & 2641 & 173 & EPCIs/\emph{communes}  & 51/959 & 8.12 & 9.62 & 4.63 & 5.83 & 10.27 & 11.09 & \bfseries 3.61 & 4.89 \\
 \texttt{paca2641} & 2641 & 192 & EPCIs/\emph{communes}  & 51/959 & 7.11  & 8.31 & 6.27 & 7.25 & 4.57 & 5.79 & \bfseries 2.89 & 3.89 \\
 \texttt{paca2641} & 2641 & 211 & EPCIs/\emph{communes}  & 51/959 & 44.62 & 31.35 & 2.57  & 3.34  & 34.20 & 26.26 & \bfseries 1.83 & 2.73 \\
 \texttt{paca2641} & 2641 & 250 & EPCIs/\emph{communes}  & 51/959 & 1.21  & 1.86 & \bfseries 0.38  & 0.90  & 0.69 & 1.15 & 0.47 & 1.06 \\
\hline
\textbf{Average} & & & & &
18.97 & 15.42 &
 10.36 &  9.39 &
17.95 & 14.62 &
\bfseries 9.70 & \bfseries 8.93 \\
\textbf{\#Best} & & & & &
3 & 3 &
4 & 4 &
4 & 4 &
\bfseries 9 & \bfseries 9 \\
\bottomrule
\end{tabular}

}
}
\end{table}

\subsubsection{Study of parameters.}
\label{sec:Analysis_Params_Strengthening-params}

{
Since our objective is to design a solution approach with as few tunable parameters as possible, we focus on systematically evaluating two key parameters of RSSV: the subproblem and final problem size ($n_{\text{cand}}$) and the bandwidth distance multiplier ($\kappa$).
We therefore analyzed how the objective function value behaved when running RSSV on the C$p$MP$^r$, with a one-hour time limit, for a fixed value of $p$ and different settings of $n_{\text{cand}}$ and $\kappa$.
The analyses are presented in Figures~\ref{fig:study_ncand_kappa_paca2641}--\ref{fig:study_spatial_kappa_paca2641}, using the instances \texttt{paca2641} and \texttt{paca5282}.
}

{
Figures~\ref{fig:study_ncand_kappa_paca2641} and~\ref{fig:study_ncand_kappa_paca5282} report the corresponding solution values obtained after running RSSV under different combinations of the parameters $\kappa \in \{1, 2, 3\}$ and candidate facility set sizes ($n_{\text{cand}} \in \{800, 1200, 1600, 2000\}$ for \texttt{paca2641}, and $n_{\text{cand}} \in \{1600, 2400, 3200, 4000\}$ for \texttt{paca5282}).
These figures reveal that both parameters have a noticeable impact on solution quality.
In general, increasing $n_{\text{cand}}$ tends to improve the objective value, as larger candidate sets enable the algorithm to explore more potential facility locations.
However, this idea may reverse for larger instances or smaller $p$ values, such as \texttt{paca5282} with $p=134$, where excessive candidate set sizes can complicate the solver’s ability to identify better quality solutions within the time limit.
In this case, the best results are obtained with a smaller candidate facility set ($n_{\text{cand}} = 2400$), highlighting that, for some instances, reducing the number of candidate locations can actually lead to better solutions under a limited execution time.
Regarding $\kappa$, higher values ($\kappa = 3$) lead to better solutions for \texttt{paca2641}, whereas smaller values ($\kappa = 1$) perform better for \texttt{paca5282}.
}


\begin{figure}[htbp]
    \caption{Solution analysis for fixed $p$ with varying $\kappa$ and $n_\text{cand}$ for the \texttt{paca2641} instance.}
    \label{fig:study_ncand_kappa_paca2641}
    \centering
    \begin{subfigure}{0.48\textwidth}
        \centering
        \resizebox{0.6\linewidth}{!}{\input{figures/tikz_figs/surface_params_paca2641_p134}}
        \caption{$p=134$.}
        \label{fig:surface_p134}
    \end{subfigure}
    \hfill 
    \begin{subfigure}{0.48\textwidth}
        \centering
        \resizebox{0.6\linewidth}{!}{\input{figures/tikz_figs/surface_params_paca2641_p250}}
        \caption{$p=250$.}
        \label{fig:surface_p250}
    \end{subfigure}
    
\end{figure}

\begin{figure}[htbp]
    \caption{Solution analysis for fixed $p$ with varying $\kappa$ and $n_\text{cand}$ for the \texttt{paca5282} instance.}
    \centering
    \begin{subfigure}{0.48\textwidth}
        \centering
        \resizebox{0.6\linewidth}{!}{\input{figures/tikz_figs/surface_params_paca_5282_p134}}
        \caption{$p=134$.}
        \label{fig:surface_p134_5282}
    \end{subfigure}
    \hfill 
    \begin{subfigure}{0.48\textwidth}
        \centering
        \resizebox{0.6\linewidth}{!}{\input{figures/tikz_figs/surface_params_paca_5282_p250}}
        \caption{$p=250$.}
        \label{fig:surface_p250_5282}
    \end{subfigure}
    
    \label{fig:study_ncand_kappa_paca5282}
\end{figure}

To understand the spatial effect of the parameter $\kappa$, we analyze the locations selected during the RSSV Filtering phase (Section~\ref{subsec:rssv}). Figure~\ref{fig:study_spatial_kappa_paca2641} shows the case of instance \texttt{paca2641} ($n_\text{cand} = 800$, $p = 134$) for different values of $\kappa \in \{1, 2, 3\}$. Blue dots indicate the selected locations prior to the Final Problem Solving step. Smaller $\kappa$ (e.g., $\kappa = 1$) leads to a more spatially dispersed configuration, while larger $\kappa$ (e.g., $\kappa = 3$) results in clustering, as the selection favors local density from the RSSV subproblems. The behavior for instance \texttt{paca5282} is similar (See Figure~\ref{fig:study_spatial_kappa_paca5282} in~\ref{sec:appendix-figures}). Note that for smaller candidate sets, increasing $\kappa$ too much can make the distance threshold constraints harder to satisfy, potentially causing infeasibility in the final step.

\begin{figure}[htbp]
    \caption{Spatial analysis for fixed $p=134$ and $n_{\text{cand}}=800$ with different $\kappa$ values for instance \texttt{paca2641}.}
    \centering
    \begin{subfigure}[t]{0.32\textwidth}
        \centering
        \includegraphics[width=1\linewidth]{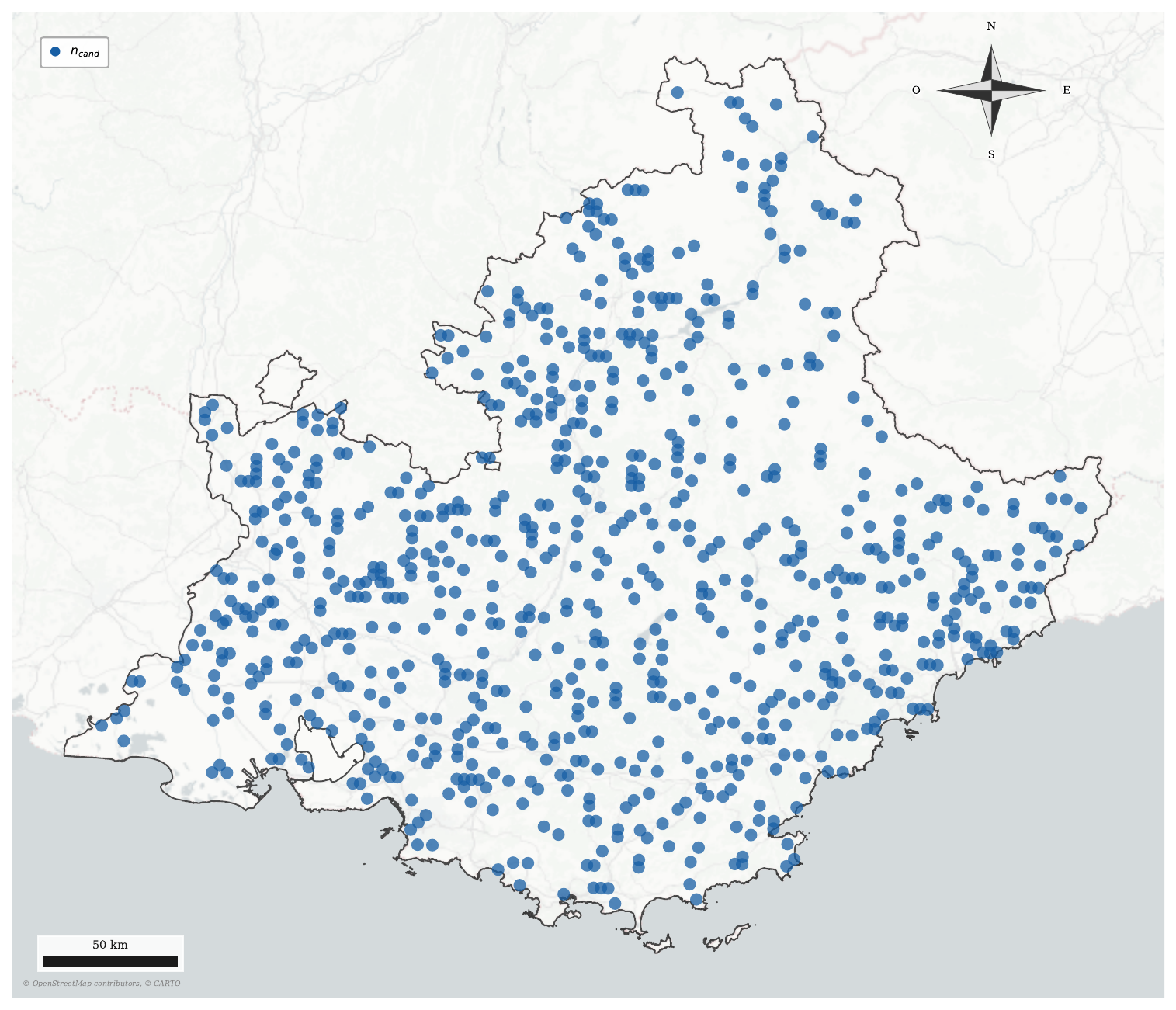}
        \caption{$\kappa = 1$}
        \label{fig:spatial_kappa_1}
    \end{subfigure}
    \hfill 
    \begin{subfigure}[t]{0.32\textwidth}
        \centering
        \includegraphics[width=\linewidth]{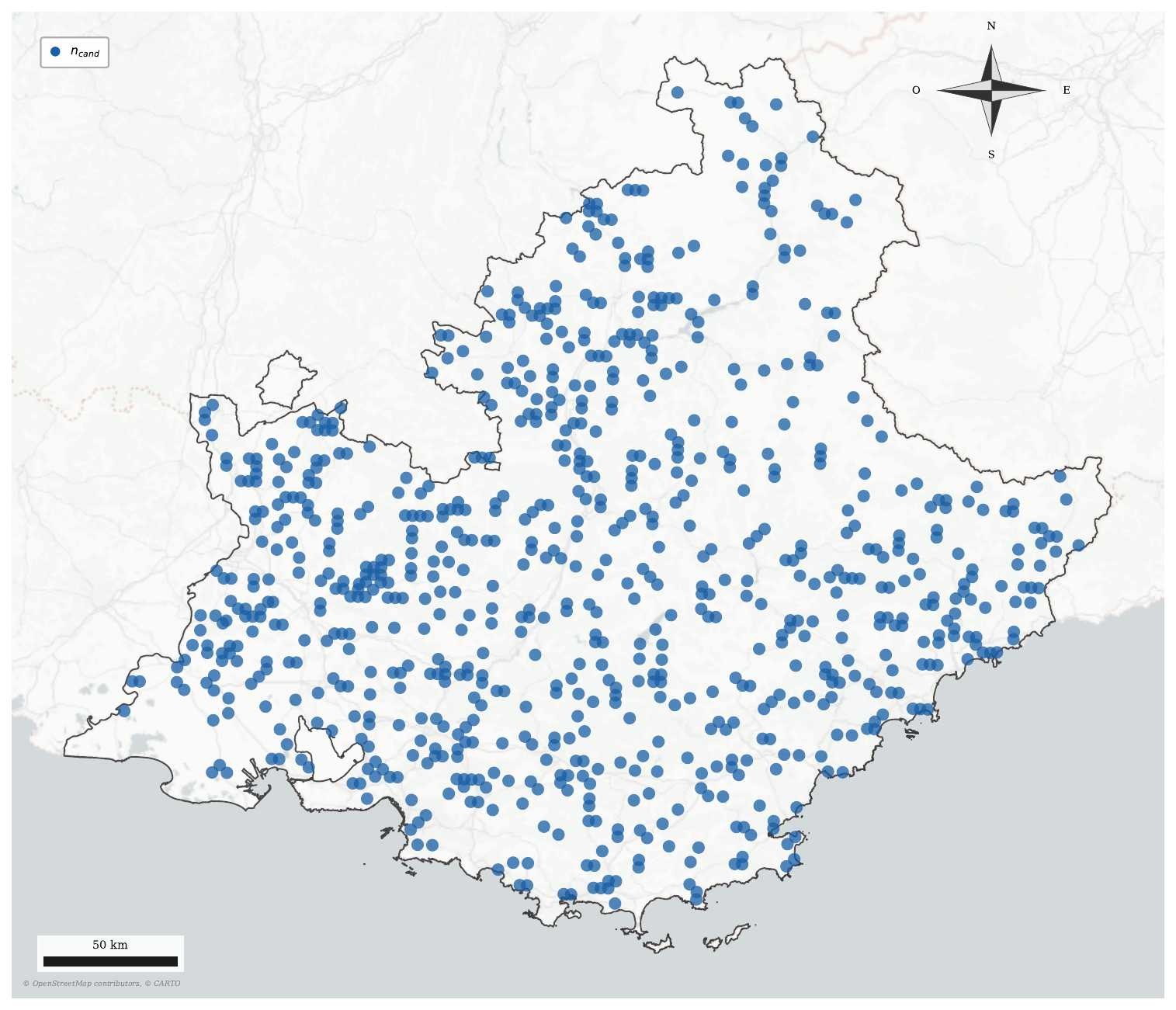}
        \caption{$\kappa = 2$}
        \label{fig:spatial_kappa_2}
    \end{subfigure}
    \hfill
    \begin{subfigure}[t]{0.32\textwidth}
        \centering
        \includegraphics[width=\linewidth]{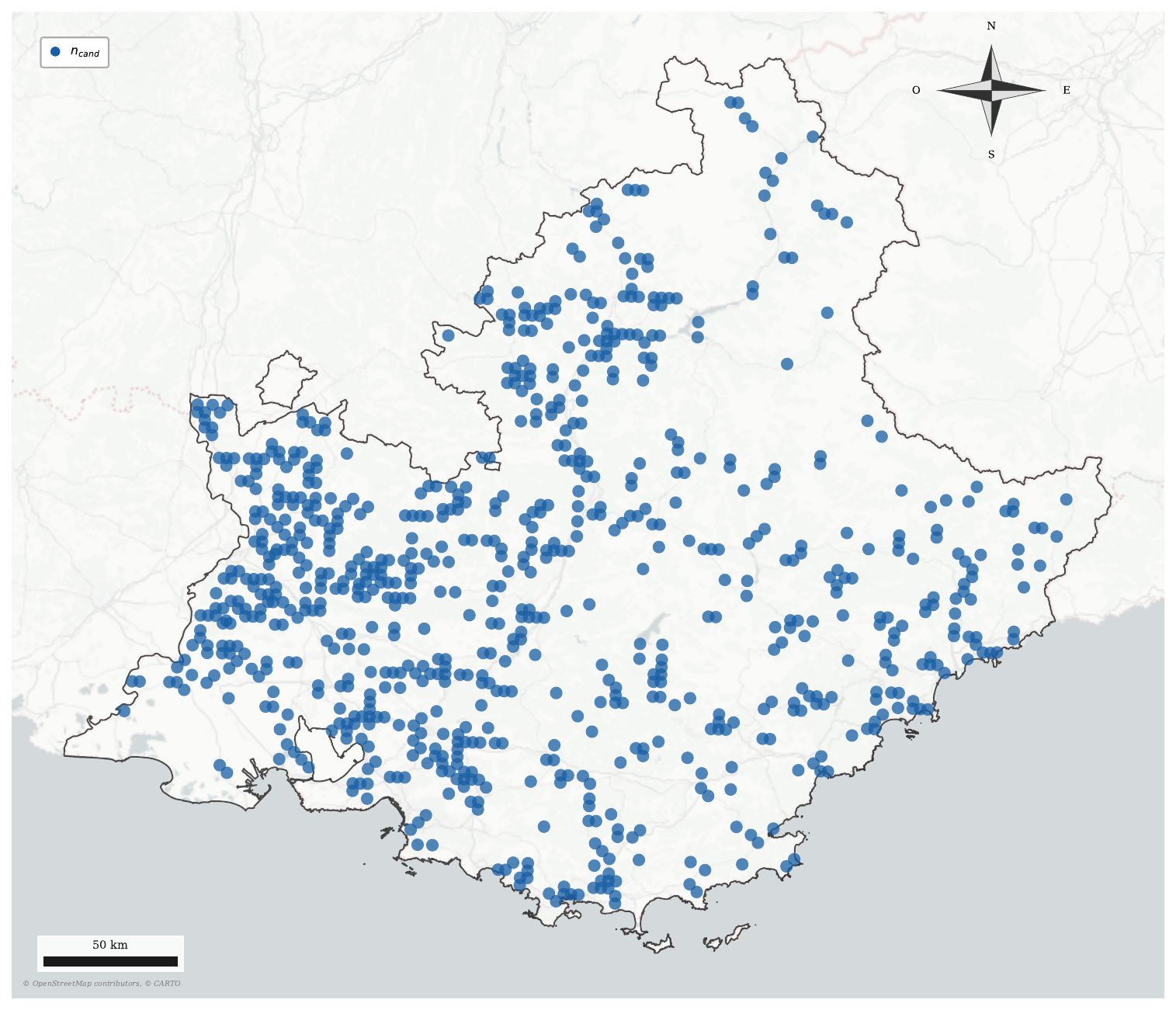}
        \caption{$\kappa = 3$}
        \label{fig:spatial_kappa_3}
    \end{subfigure}
    
    \label{fig:study_spatial_kappa_paca2641}
\end{figure}


\FA{In summary, based on these analyses, we recommend performing a sensitivity study of $\kappa$ and $n_{\text{cand}}$ for each new instance, in order to identify the configuration that best suits its specific spatial and demand characteristics.} For the instance \texttt{paca2641}, we set $n_{\text{cand}} = |J|$ and $\kappa = 3$. For the larger instance \texttt{paca5282}, we considered two candidate set sizes, $n_{\text{cand}} \in {2400, |J|}$, with $\kappa = 1$.

Table~\ref{tab:rssv_params} summarizes the RSSV parameters used in all experiments. Grouped into two categories: Parameters, including the time limit (\texttt{TL} [s]), $n_{\text{cand}}$, $\kappa$, and the number of subproblems ($M$), and Modes, which control options such as post-optimization and the inclusion of additional valid inequalities for ILP tightening. \texttt{DC} denotes the default configuration, while explicit values indicate modifications tested in specific experiments.

\begin{table}[htbp]
\centering
\caption{RSSV configuration parameters applied across computational experiments}
\label{tab:rssv_params}
{\tiny
\begin{tabular}{llcccccc}
\hline
\multicolumn{2}{c}{Configuration} & Default RSSV & Tables \ref{tab:rssv_paca2641} \& \ref{tab:rssv_paca2641_cover} & Tables \ref{tab:rssv_paca5282} \& \ref{tab:rssv_paca5282_cover} & Table \ref{tab:gap_comparison_arcgis} & Table \ref{tab:rssv_cpmp_lns_p3038_fnl4461} & Table \ref{tab:rssv_cpmp_SJC} \\ 
\hline
\multirow{4}{*}{Parameters} 
 & \texttt{TL} [s] & 3600 & \texttt{DC} & \texttt{DC} & 840 & \texttt{DC} & 40 \\ 
 & $n_{\text{cand}}$ & $|J|$ & \texttt{DC} & \{$|J|, 2400$\} & $|J|$ & $\{|J|, 2p + 0.1|J|\}$ & $\{|J|, |J|/2\}$ \\ 
 & $\kappa$ & 1 & 3 & \texttt{DC} & 1 & \texttt{DC} & \texttt{DC} \\ 
 & $M$ & $\frac{5|J|}{n_{\text{cand}}}$ & \texttt{DC} & \texttt{DC} & \texttt{DC} & \texttt{DC} & \texttt{DC} \\[3pt] 
 \toprule
\multirow{2}{*}{Modes} 
 & Post-Optimization & On & \texttt{DC} & \texttt{DC} & On & \texttt{DC} & Off \\ 
 & ILP Tightening & Off & \begin{tabular}[c]{@{}c@{}}\textit{DisaggCuts},\\ \textit{AllCuts} \end{tabular} & \begin{tabular}[c]{@{}c@{}}\textit{DisaggCuts},\\ \textit{AllCuts} \end{tabular} & \texttt{DC} & \texttt{DC} & \texttt{DC} \\ 
\hline
\multicolumn{8}{l}{\texttt{TL} [s]: Total time limit in seconds} \\
\multicolumn{8}{l}{\texttt{DC}: Same as default RSSV configuration} \\
\multicolumn{8}{l}{Tables \ref{tab:rssv_cpmp_lns_p3038_fnl4461}, \ref{tab:rssv_cpmp_SJC}, and \ref{tab:gap_comparison_arcgis} are reported in the Appendix \ref{sec:appendix-experiments}.} \\
\hline
\end{tabular}

}
\end{table}

\subsection{Comparing the RSSV Matheuristic and ILP with Territorial Coverage Constraints}
\label{sec:CompExp_RSSV_paca_coverage} 

We now asses the RSSV matheuristic on the \texttt{paca2641} and \texttt{paca5282} instances for solving the C$p$MP$^r$ with territorial coverage constraints. The results are summarized in four tables: for \texttt{paca2641}, Table~\ref{tab:rssv_paca2641} reports solutions without coverage constraints, while Table~\ref{tab:rssv_paca2641_cover} includes territorial (C$p$MP$^r$-TC) and multi-scale coverage (C$p$MP$^r$-MTC). Similarly, Tables~\ref{tab:rssv_paca5282} and~\ref{tab:rssv_paca5282_cover} present results for \texttt{paca5282}, without and with coverage constraints.

The columns labeled ``\textbf{RSSV($n_{\text{cand}}$) + \(<\text{cut}>\)}'' report RSSV results, where $n_{\text{cand}}$ is the number of candidate locations and \(<\text{cut}>\) indicates the type of constraints applied in the Final Problem Solving step. The attribute ``$\mathrm{D}$ [s]'' represents the distance threshold applied in the ILP or set by the RSSV heuristic. All other columns follow the format of previous tables. RSSV was executed using the configurations defined in Table \ref{tab:rssv_params}.

For \texttt{paca2641}, we let the ILP run for 5 hours with 256~GB of memory; the best-known solution used to compute Gap [\%] was then taken as the better of this ILP solution (gaps at most 1\%) and the RSSV matheuristic solution.
Tables~\ref{tab:rssv_paca2641} and~\ref{tab:rssv_paca2641_cover} report the best solutions from Tables~\ref{tab:ilp_strength_paca2641} and~\ref{tab:ilp_strength_paca2641_cover} as ``\textbf{ILP}\textsubscript{BestCuts}'' for direct comparison with ILP solutions strengthened by valid inequalities, highlighting the superior solution quality of RSSV. To assess the effect of the threshold $\mathrm{D}$ defined by the RSSV heuristic, the ILP was also solved with $\mathrm{D} = 7200$\,s (column ``\textbf{ILP ($\mathrm{D}$ = 7200\,s)}''), showing that RSSV's thresholds are flexible enough to explore good solutions.

\begin{table}[htbp]
\centering
\caption{Performance comparison of RSSV and ILP on instance \texttt{paca2641} for C$p$MP$^r$.}
\label{tab:rssv_paca2641}
{\tiny
\begin{tabular}{
    l  
    S[table-format=4.0]  
    S[table-format=3.0]  
    S[table-format=2.2, detect-weight]  
    l  
    S[table-format=2.2, detect-weight]  
    l  
    S[table-format=4.2]  
    S[table-format=2.2, detect-weight]  
    l  
    S[table-format=4.0]  
}
\toprule
\textbf{Instance} & \(\boldsymbol{|J| = |I|}\) & \(\boldsymbol{p}\) & 
\multicolumn{2}{c}{\textbf{ILP}\textsubscript{BestCuts}} & 
\multicolumn{3}{c}{\textbf{RSSV($|J|$) + \textit{DisaggCuts}}} & 
\multicolumn{3}{c}{\textbf{ILP ($\mathrm{D}$ = 7200\,s)}} \\
\cmidrule(lr){4-5} \cmidrule(lr){6-8} \cmidrule(lr){9-11}
& & & {Gap [\%]} & {Time [s]} & {Gap [\%]} & {Time [s]} & {$\mathrm{D}$ [s]} & {Gap [\%]} & {Time [s]} & {$\mathrm{D}$ [s]} \\
\hline
\texttt{paca2641} & 2641 & 134  & 95.83 & \texttt{TL} & \bfseries 0.13 & \texttt{TL} & 3656.7 & 23.54  & \texttt{TL} & 7200 \\
\texttt{paca2641} & 2641 & 173  & 9.42  & \texttt{TL} & \bfseries 0.00 & \texttt{TL} & 3544.2 & 12.75 & \texttt{TL} & 7200 \\
\texttt{paca2641} & 2641 & 192  & 1.10 & \texttt{TL} & \bfseries 0.01 & \texttt{TL} & 3609.1 & 8.20 & \texttt{TL} & 7200 \\
\texttt{paca2641} & 2641 & 211  & 1.42  & \texttt{TL} & \bfseries 0.00 & \texttt{TL} & 3958.0 & 9.46  & \texttt{TL} & 7200 \\
\texttt{paca2641} & 2641 & 250  & 2.58  & \texttt{TL} & \bfseries 0.00 & \texttt{TL} & 3470.2 & 0.05  & \texttt{TL} & 7200 \\
\hline
\textbf{Average} & & & 22.07 &  & 0.03 &  & 3609.1 & 10.80 &  & 7200 \\
\multicolumn{11}{l}{\texttt{TL}: Time limit of 3600 seconds} \\
\bottomrule
\end{tabular}

}
\end{table}

\begin{table}[htbp]
\centering
\caption{Performance comparison of RSSV and ILP on instance \texttt{paca2641} for C$p$MP$^r$-TC and C$p$MP$^r$-MTC.}
\label{tab:rssv_paca2641_cover}
{\tiny

\begin{tabular}{
    l  
    S[table-format=4.0]  
    S[table-format=4.0]  
    l  
    l  
    S[table-format=2.2, detect-weight]  
    l  
    S[table-format=2.2, detect-weight]  
    l  
    S[table-format=4.1]  
    S[table-format=2.2, detect-weight]  
    l  
    S[table-format=4.0]  
}
\toprule
\textbf{Instance} & \(\boldsymbol{|J| = |I|}\) & \(\boldsymbol{p}\) & \textbf{Territorial Div.} & \#\textbf{units} &
\multicolumn{2}{c}{\textbf{ILP}\textsubscript{BestCuts}} &
\multicolumn{3}{c}{\textbf{RSSV($|J|$) + \textit{AllCuts}}} &
\multicolumn{3}{c}{\textbf{ILP ($\mathrm{D}$ = 7200\,s)}} \\
\cmidrule(lr){6-7} \cmidrule(lr){8-10} \cmidrule(lr){11-13}
& & & & & {Gap [\%]} & {Time [s]} & {Gap [\%]} & {Time [s]} & {$\mathrm{D}$ [s]} & {Gap [\%]} & {Time [s]} & {$\mathrm{D}$ [s]} \\
\hline
\texttt{paca2641} & 2641 & 134 & EPCIs    &   51 & 30.19 & \texttt{TL} & {\bfseries 0.23} & \texttt{TL} & 3610.4 & 8.47  & \texttt{TL} & 7200 \\
\texttt{paca2641} & 2641 & 173 & EPCIs    &   51 & 3.99  & \texttt{TL} & {\bfseries 0.00} & \texttt{TL} & 3630.8 & 17.53 & \texttt{TL} & 7200 \\
\texttt{paca2641} & 2641 & 192 & EPCIs    &   51 & 18.53 & \texttt{TL} & {\bfseries 0.00} & \texttt{TL} & 3809.8 & 12.47 & \texttt{TL} & 7200 \\
\texttt{paca2641} & 2641 & 211 & EPCIs    &   51 & 1.41  & \texttt{TL} & {\bfseries 0.00} & \texttt{TL} & 3544.2 & 1.76  & \texttt{TL} & 7200 \\
\texttt{paca2641} & 2641 & 250 & EPCIs    &   51 & 0.64  & \texttt{TL} & {\bfseries 0.00} & \texttt{TL} & 3783.3 & 0.12  & \texttt{TL} & 7200 \\ \hline
\texttt{paca2641} & 2641 & 134 & \emph{cantons}  & 192 & 2.39  & \texttt{TL} & {\bfseries 0.00} & \texttt{TL} & 3769.2 & 0.52  & \texttt{TL} & 7200 \\ 
\texttt{paca2641} & 2641 & 173 & \emph{cantons}  & 192 & 1.33  & \texttt{TL} & {\bfseries 0.00} & \texttt{TL} & 3822.1 & 0.17  & \texttt{TL} & 7200 \\
\texttt{paca2641} & 2641 & 192 & \emph{cantons}  & 192 & 0.76  & \texttt{TL} & {\bfseries 0.00} & \texttt{TL} & 3810.3 & 0.15  & \texttt{TL} & 7200 \\
\texttt{paca2641} & 2641 & 211 & \emph{cantons}  & 192 & 0.47  & \texttt{TL} & {\bfseries 0.00} & \texttt{TL} & 3544.2 & 0.26  & \texttt{TL} & 7200 \\
\texttt{paca2641} & 2641 & 250 & \emph{cantons}  & 192 & 0.40  & \texttt{TL} & {\bfseries 0.00} & \texttt{TL} & 3735.8 & 0.08  & \texttt{TL} & 7200 \\ \hline
\texttt{paca2641} & 2641 & 134 & \emph{communes} & 959 & 3.45  & \texttt{TL} & {\bfseries 0.00} & \texttt{TL} & 3769.2 & 31.32 & \texttt{TL} & 7200 \\ 
\texttt{paca2641} & 2641 & 173 & \emph{communes} & 959 & 16.51 & \texttt{TL} & {\bfseries 0.00} & \texttt{TL} & 4297.9 & 1.23  & \texttt{TL} & 7200 \\
\texttt{paca2641} & 2641 & 192 & \emph{communes} & 959 & 2.91  & \texttt{TL} & {\bfseries 0.00} & \texttt{TL} & 4208.0 & 1.06  & \texttt{TL} & 7200 \\
\texttt{paca2641} & 2641 & 211 & \emph{communes} & 959 & 1.21  & \texttt{TL} & {\bfseries 0.00} & \texttt{TL} & 3822.1 & 0.10  & \texttt{TL} & 7200 \\
\texttt{paca2641} & 2641 & 250 & \emph{communes} & 959 & 0.69  & \texttt{TL} & {\bfseries 0.00} & \texttt{TL} & 3634.4 & 0.41  & \texttt{TL} & 7200 \\ \hline
\texttt{paca2641} & 2641 & 134 & EPCIs/\emph{communes} & 51/959 & 2.33 & \texttt{TL} & {\bfseries 0.06} & \texttt{TL} & 3920.3 & 6.04  & \texttt{TL} & 7200 \\ 
\texttt{paca2641} & 2641 & 173 & EPCIs/\emph{communes} & 51/959 & 3.61 & \texttt{TL} & {\bfseries 0.00} & \texttt{TL} & 4440.9 & 4.30  & \texttt{TL} & 7200 \\
\texttt{paca2641} & 2641 & 192 & EPCIs/\emph{communes} & 51/959 & 2.89 & \texttt{TL} & {\bfseries 0.00} & \texttt{TL} & 3879.1 & 0.82  & \texttt{TL} & 7200 \\
\texttt{paca2641} & 2641 & 211 & EPCIs/\emph{communes} & 51/959 & 1.83 & \texttt{TL} & {\bfseries 0.00} & \texttt{TL} & 3916.9 & 0.39  & \texttt{TL} & 7200 \\
\texttt{paca2641} & 2641 & 250 & EPCIs/\emph{communes} & 51/959 & 0.38 & \texttt{TL} & {\bfseries 0.00} & \texttt{TL} & 4760.6 & 0.10  & \texttt{TL} & 7200 \\ 
\hline
\textbf{Average} & & & & & 4.80 &  & {\bfseries 0.01} &  & 3885.5 & 4.36 &  & 7200 \\
\multicolumn{13}{l}{\texttt{TL}: Time limit of 3600 seconds} \\
\bottomrule
\end{tabular}

}
\end{table}

\begin{table}[htbp]
\centering
\caption{Performance comparison of RSSV and ILP on instance \texttt{paca5282} for C$p$MP$^r$.}
\label{tab:rssv_paca5282}
{\tiny
\begin{tabular}{
    l  
    S[table-format=4.0]  
    S[table-format=3.0]  
    S[table-format=3.2, detect-weight]  
    S[table-format=3.2, detect-weight]  
    S[table-format=3.2, detect-weight]  
    S[table-format=3.2, detect-weight]  
    S[table-format=4.1]  
    S[table-format=3.2, detect-weight]  
    S[table-format=3.2, detect-weight]  
    S[table-format=4.1]  
}
\toprule
\textbf{Instance} & \(\boldsymbol{|J| = |I|}\) & \(\boldsymbol{p}\) & 
\multicolumn{2}{c}{\textbf{ILP}\textsuperscript{5h}} & 
\multicolumn{3}{c}{\textbf{RSSV($|J|$) + \textit{DisaggCuts}}} &
\multicolumn{3}{c}{\textbf{RSSV(2400) + \textit{DisaggCuts}}} \\
\cmidrule(lr){4-5} \cmidrule(lr){6-8} \cmidrule(lr){9-11}
& & & {Gap [\%]} & {GapILP [\%]} & {Gap [\%]} & {GapILP\textsuperscript{*} [\%]} & {$\mathrm{D}$ [s]} & {Gap [\%]} & {GapILP\textsuperscript{*} [\%]} & {$\mathrm{D}$ [s]} \\
\hline
\texttt{paca5282} & 5282 & 134  & 49.83 & 68.69 & 32.50 & 50.06 & 4163.9 & \bfseries 0.00 & 10.07 & 4572.8 \\
\texttt{paca5282} & 5282 & 173  & \bfseries 0.00  & 49.96 & 6.57 & 35.48 & 3978.3 & 8.36 & 9.35 & 4713.6 \\
\texttt{paca5282} & 5282 & 192  & 55.24 & 67.99 & \bfseries 0.00 & 30.02 & 4210.0 & 8.86 & 5.58 & 4391.0 \\
\texttt{paca5282} & 5282 & 211  & 1.88  & 43.93 & \bfseries 0.00 & 26.76 & 4441.6 & 15.02 & 5.74 & 4236.3 \\
\texttt{paca5282} & 5282 & 250  & 1.24  & 43.23 & \bfseries 0.00 & 25.49 & 4708.6 & 23.00 & 2.77 & 4266.1 \\
\hline
\textbf{Average} & & & 21.64 & 54.76 & \bfseries 7.81 & 33.56 & 4300.48 & 11.05 & 6.70 & 4435.96 \\
\textbf{\#Best} & & & 1 &  & \bfseries 3 &  &  & 1 &  &  \\
\multicolumn{11}{l}{GapILP\textsuperscript{*}: CPLEX gap for the reduced problem} \\
\bottomrule
\end{tabular}

}
\end{table}

For the larger \texttt{paca5282} instance, solving the ILP with a one hour time limit required increasing the memory to 256~GB, and the results presented very poor gaps.
Thus, RSSV performance is compared only with the ILP solved with a 5-hour limit (``\textbf{ILP}\textsuperscript{5h}''). Tables~\ref{tab:rssv_paca5282} and~\ref{tab:rssv_paca5282_cover} report GapILP\textsuperscript{*} [\%], the optimality gap returned by CPLEX for the reduced problem under the respective $\mathrm{D}$. Two values of $n_{\text{cand}}$ were considered: $|J|$ and $2400$, as discussed in Section~\ref{sec:Analysis_Params_Strengthening}. Overall, RSSV consistently outperformed the ILP within 5 hours, achieving lower gaps and more best solutions. For smaller $p$, a reduced candidate set is more effective. Without territorial coverage, considering all candidates ($n_{\text{cand}} = |J|$) performs better, whereas with coverage constraints, the reduced set ($n_{\text{cand}} = 2400$) provides the best average Gap [\%] and the number of best solutions identified (\textbf{\#Best}).




\begin{table}[htbp]
\centering
\caption{Performance comparison of RSSV and ILP on instance \texttt{paca5282} for C$p$MP$^r$-TC and C$p$MP$^r$-MTC.}
\label{tab:rssv_paca5282_cover}
{\tiny
\resizebox{\textwidth}{!}{%
\begin{tabular}{
    l  
    S[table-format=4.0]  
    S[table-format=4.0]  
    l  
    l  
    S[table-format=3.2, detect-weight]  
    S[table-format=2.2, detect-weight]  
    S[table-format=3.2, detect-weight]  
    S[table-format=2.2, detect-weight]  
    S[table-format=4.1]  
    S[table-format=2.2, detect-weight]  
    S[table-format=2.2, detect-weight]  
    S[table-format=4.1]  
}
\toprule
\textbf{Instance} & \(\boldsymbol{|J| = |I|}\) & \(\boldsymbol{p}\) & \textbf{Territorial Div.} & \#\textbf{units} &
\multicolumn{2}{c}{\textbf{ILP}\textsuperscript{5h}} &
\multicolumn{3}{c}{\textbf{RSSV($|J|$) + \textit{AllCuts}}} &
\multicolumn{3}{c}{\textbf{RSSV(2400) + \textit{AllCuts}}} \\
\cmidrule(lr){6-7} \cmidrule(lr){8-10} \cmidrule(lr){11-13}
& & & & & {Gap [\%]} & {GapILP [\%]} & {Gap [\%]} & {GapILP\textsuperscript{*} [\%]} & {$\mathrm{D}$ [s]} & {Gap [\%]} & {GapILP\textsuperscript{*} [\%]} & {$\mathrm{D}$ [s]} \\
\hline
\texttt{paca5282} & 5282 & 134 & EPCIs    &   51 & 65.39 & 66.84 & 215.18 & 79.19 & 4738.7 & \bfseries 0.00  & 13.49 & 4391.0 \\
\texttt{paca5282} & 5282 & 173 & EPCIs    &   51 & 66.48 & 72.67 & \bfseries 0.00   & 33.51 & 4465.6 & 12.87 & 9.91  & 4391.0 \\
\texttt{paca5282} & 5282 & 192 & EPCIs    &   51 & 43.31 & 64.89 & \bfseries 0.00   & 35.02 & 4514.6 & 15.06 & 8.76  & 4056.5 \\
\texttt{paca5282} & 5282 & 211 & EPCIs    &   51 & \bfseries 0.00  & 45.84 & 2.98   & 32.20 & 4514.6 & 21.68 & 4.35  & 4056.5 \\
\texttt{paca5282} & 5282 & 250 & EPCIs    &   51 & 22.64 & 52.00 & \bfseries 0.00   & 27.74 & 3978.3 & 31.00 & 6.31  & 4056.5 \\ \hline
\texttt{paca5282} & 5282 & 134 & \emph{cantons}  & 192 & 53.09 & 47.16 & 11.98  & 24.77 & 4441.6 & \bfseries 0.00  & 4.87  & 4441.6 \\ 
\texttt{paca5282} & 5282 & 173 & \emph{cantons}  & 192 & 60.27 & 42.87 & 20.11  & 23.24 & 4441.6 & \bfseries 0.00  & 2.04  & 4163.9 \\
\texttt{paca5282} & 5282 & 192 & \emph{cantons}  & 192 & 59.29 & 41.27 & 10.76  & 15.49 & 4163.9 & \bfseries 0.00  & 1.00  & 4163.9 \\
\texttt{paca5282} & 5282 & 211 & \emph{cantons}  & 192 & 29.45 & 40.06 & 9.74   & 23.72 & 3978.3 & \bfseries 0.00  & 6.13  & 3978.3 \\
\texttt{paca5282} & 5282 & 250 & \emph{cantons}  & 192 & 58.77 & 48.21 & \bfseries 0.00   & 13.42 & 4210.0 & 10.36 & 1.02  & 3978.3 \\ \hline
\texttt{paca5282} & 5282 & 134 & \emph{communes} & 959 & 260.59 & 81.14 & 26.35  & 41.35 & 4441.6 & \bfseries 0.00  & 7.64  & 4086.0 \\ 
\texttt{paca5282} & 5282 & 173 & \emph{communes} & 959 & 44.09  & 51.32 & 102.04 & 61.35 & 4708.6 & \bfseries 0.00  & 3.46  & 4391.0 \\
\texttt{paca5282} & 5282 & 192 & \emph{communes} & 959 & 34.31  & 47.53 & 7.19   & 28.04 & 4071.3 & \bfseries 0.00  & 3.11  & 3729.6 \\
\texttt{paca5282} & 5282 & 211 & \emph{communes} & 959 & 22.07  & 42.58 & \bfseries 0.00   & 21.34 & 4392.5 & 2.54  & 2.56  & 3575.9 \\
\texttt{paca5282} & 5282 & 250 & \emph{communes} & 959 & 16.92  & 33.55 & \bfseries 0.00   & 15.79 & 3920.1 & 0.77  & 3.46  & 3586.7 \\ \hline
\texttt{paca5282} & 5282 & 134 & EPCIs/\emph{communes} & 51/959 & 182.20 & 76.07 & 26.01  & 41.64 & 4925.5 & \bfseries 0.00  & 5.57  & 4086.0 \\ 
\texttt{paca5282} & 5282 & 173 & EPCIs/\emph{communes} & 51/959 & 46.64  & 55.41 & 15.10  & 35.23 & 4802.1 & \bfseries 0.00  & 7.09  & 4460.6 \\
\texttt{paca5282} & 5282 & 192 & EPCIs/\emph{communes} & 51/959 & 35.16  & 46.79 & \bfseries 0.00   & 21.79 & 3978.3 & 1.28  & 2.02  & 4391.0 \\
\texttt{paca5282} & 5282 & 211 & EPCIs/\emph{communes} & 51/959 & 24.99  & 42.99 & \bfseries 0.00   & 21.69 & 3828.4 & 5.14  & 4.26  & 3978.3 \\
\texttt{paca5282} & 5282 & 250 & EPCIs/\emph{communes} & 51/959 & 21.36  & 37.83 & \bfseries 0.00   & 17.73 & 3692.8 & 5.28  & 0.26  & 3541.6 \\ 
\hline
\textbf{Average} & & & & & 57.35 & 51.85 & 22.37 & 30.71 & 4310.4 & \bfseries 5.30 & 4.87 & 4075.2 \\
\textbf{\#Best} & & & & & 1 &  & 9 &  &  & \bfseries 10 &  & \\
\multicolumn{13}{l}{GapILP\textsuperscript{*}: CPLEX gap for the reduced problem} \\
\bottomrule
\end{tabular}

}
}
\end{table}

\subsection{Analysis of Solutions with Territorial Coverage for \texttt{paca2641}}
\label{sec:Analysis_Solutions_PACA}

We now evaluate the impact of territorial coverage constraints on the solutions of the C$p$MP$^r$. For this analysis, we used the best-known solutions obtained for the \texttt{paca2641} instance, as near-optimal solutions allowed a more meaningful comparison than the \textcolor{black}{larger-scale} \texttt{paca5282} instance.

Without coverage constraints, solutions tend to favor high-demand areas, concentrating facilities in densely populated zones. Enforcing coverage improves spatial equity by ensuring service across all territorial units, but with higher solution costs. We quantify this increase using the Relative Increase in Cost (RIC):
$
\left( \frac{\text{Solution}_{\textit{cover}} - \text{Solution}_{\textit{nocover}}}{\text{Solution}_{\textit{nocover}}} \right) \times 100
$,
where $\text{Solution}_{\textit{cover}}$ and $\text{Solution}_{\textit{nocover}}$ are the objective values obtained for C$p$MP$^r$ with and without coverage, respectively.

Figure~\ref{fig:relative_cost_increase} shows $\mathrm{RIC}$ across different values of $p$, comparing solutions without coverage (black) to those with coverage at different territorial levels: \emph{EPCIs} (blue, 51 units), \emph{cantons} (orange, 192 units), and \emph{communes} (green, 959 units). At the \emph{EPCI} level, cost increases are minimal (below 1.1\%), as the number of facilities generally suffices to cover most units. For \emph{cantons}, costs peak at 85\% when $p$ equals the number of \emph{canton} units, reflecting the constraint forcing facilities into lower-demand regions; beyond this point, increasing $p$ reduces relative cost. \emph{Communes}-level coverage shows an increasing relative cost with $p$, reflecting the larger number of spatial units to cover.

\begin{figure}[htbp]
    \caption{$\mathrm{RIC}$ of territorial cover models vs. C$p$MP$^r$ (no cover) applied to cinemas with varying $p$ for \texttt{paca2641}.}
    \centering
    \resizebox{0.7\linewidth}{!}{\begin{tikzpicture}
\begin{axis}[
    width=0.84\linewidth,
    height=0.5\linewidth,
    xlabel={$p$ (number of cinemas to install)},
    ylabel={Relative Increase in Cost (\%)},
    legend style={at={(1.05,1)},
                  anchor=north west,
                  font=\footnotesize,
                  /tikz/mark size=2pt,
                  nodes={scale=0.9, transform shape}},
    xtick={134,173,192,211,250},
    ymin=0,
    grid=major,
]

\addplot[
    black,
    ultra thick
] coordinates {
(134,0.0)
(173,0.0)
(192,0.0)
(211,0.0)
(250,0.0)
};
\addlegendentry{No Cover ($y = 0$)}

\addplot[
    color=blue,
    mark=*,
    ultra thick
] coordinates {
(134,1.361)
(173,0.689)
(192,0.385)
(211,0.255)
(250,0.114)
};
\addlegendentry{EPCI}
\node at (axis cs:138,1.361) [anchor=south east, font=\footnotesize] { 1.4\% };
\node at (axis cs:177,0.689) [anchor=south east, font=\footnotesize] { 0.7\% };
\node at (axis cs:196,0.385) [anchor=south east, font=\footnotesize] { 0.4\% };
\node at (axis cs:215,0.255) [anchor=south east, font=\footnotesize] { 0.3\% };
\node at (axis cs:254,0.114) [anchor=south east, font=\footnotesize] { 0.1\% };

\addplot[
    color=orange,
    mark=*,
    ultra thick
] coordinates {
(134,54.154)
(173,74.746)
(192,85.367)
(211,23.778)
(250,11.802)
};
\addlegendentry{Canton}
\node at (axis cs:138,54.154) [anchor=south east, font=\footnotesize] { 54.2\% };
\node at (axis cs:177,74.746) [anchor=south east, font=\footnotesize] { 74.7\% };
\node at (axis cs:196,85.367) [anchor=south east, font=\footnotesize] { 85.4\% };
\node at (axis cs:215,23.778) [anchor=south east, font=\footnotesize] { 23.8\% };
\node at (axis cs:254,11.802) [anchor=south east, font=\footnotesize] { 11.8\% };

\addplot[
    color=green!70!black,
    mark=*,
    ultra thick
] coordinates {
(134,30.739)
(173,38.233)
(192,42.115)
(211,46.402)
(250,54.5)
};
\addlegendentry{Commune}
\node at (axis cs:138,31.051) [anchor=south east, font=\footnotesize] { 31.1\% };
\node at (axis cs:177,38.208) [anchor=south east, font=\footnotesize] { 38.2\% };
\node at (axis cs:196,42.115) [anchor=south east, font=\footnotesize] { 42.1\% };
\node at (axis cs:215,46.395) [anchor=south east, font=\footnotesize] { 46.4\% };
\node at (axis cs:254,54.499) [anchor=south east, font=\footnotesize] { 54.5\% };

\end{axis}
\end{tikzpicture}}
    
    \label{fig:relative_cost_increase}
\end{figure}

{Table~\ref{tab:distribution-travel-time-p-192} compares travel time distributions for $p = 192$, corresponding to the actual number of cinemas in the PACA region. The second column reports real cinema locations, mapped for \texttt{paca2641} and assigned using the relaxed GAP formulation. The remaining columns show solutions for the classical C$p$MP$^r$ and those with \emph{EPCI}, \emph{canton}, and \emph{commune} coverage, as well as a capacitated Maximum Coverage Location Problem \citep{Church1974} solution with a 20-minute cutoff (MCLP-20min), including percentages of demand within travel time intervals, summary statistics (average and maximum travel time), the Gini coefficient of the travel time distribution, and the number of spatial units covered.}

\FA{Real cinema locations are less efficiency oriented than the classical C$p$MP$^r$ solution, with more people over 30 minutes away and higher average and maximum travel times. This also reflects cinemas installed over time, as population distribution has since changed. Both solutions cover a similar number of territorial units, indicating that efficiency-based territorial logic aligns with the actual distribution. Compared to C$p$MP$^r$, adding coverage constraints has different effects depending on the type: \emph{EPCI} coverage minimally impacts costs and travel times. For \emph{Communes}, coverage increases the number of people over 20 minutes away, but reduces the maximum travel time and the Gini coefficient relative, indicating a more equitable distribution despite the longer tail. \emph{Cantons} coverage results in the largest $\mathrm{RIC}$ increase among the C$p$MP$^r$ variants, reducing the population within 5 minutes by over 20\%, but decreasing maximum travel distance and the Gini coefficient among them, directly reflecting its equity objective. The MCLP-20min solution enforces its cutoff exactly, with no demand assigned beyond 20 minutes and the lowest overall Gini coefficient (0.177), showing that a more equitable distribution does not necessarily imply shorter average travel distances. Here, a clear trade-off appears within the interval structure: the percentage served within 5 minutes drops sharply, in exchange for a much higher percentage served within 20 minutes. However, this comes with an efficiency cost, with an increase of $123.49\%$ in $\mathrm{RIC}$. In general, tcoverage constraints improve accessibility while preserving efficient-location logic, and such constraints are sometimes obligations of some service installation process.}



    

\begin{table}[htbp]
\centering
\caption{Comparison of cinema distributions for solutions with and without territorial coverages for $p=192$ for \texttt{paca2641}.}
\label{tab:distribution-travel-time-p-192}
{\tiny

\begin{tabular}{l@{\hspace{1cm}}r@{\hspace{1cm}}r@{\hspace{1cm}}r@{\hspace{1cm}}r@{\hspace{1cm}}r@{\hspace{1cm}}r}
\toprule
\multirow{2}{*}{\textbf{Analysis}} & \multicolumn{6}{c}{\textbf{Solutions}} \\
\cmidrule(lr){2-7}
  & \textbf{Real} & \textbf{C$p$MP$^r$} & \textbf{Cover EPCIs} & \textbf{Cover Canton} & \textbf{Cover Commune} & \textbf{MCLP-20min} \\
\midrule
\textbf{Interval (minutes)}        & \multicolumn{6}{c}{\textbf{Percentage of demand (\%)}} \\
\cmidrule(lr){1-7}
0--5    & 33.94 & 46.53 & 46.01 & 25.50 & 34.18 & 4.22 \\
5--10    & 11.22 & 19.12 & 19.57 & 18.31 & 20.90 & 10.34 \\
10--20    & 29.04 & 30.97 & 31.01 & 37.22 & 32.11 & 85.44 \\
20--30    & 12.13 & 2.83 & 2.86 & 12.14 & 11.95 & 0.00 \\
30+    & 13.67 & 0.55 & 0.55 & 6.83 & 0.86 & 0.00 \\
\cmidrule(lr){1-7}
\textbf{Statistics}        & \multicolumn{6}{c}{\textbf{Time (minutes)}} \\
\cmidrule(lr){1-7}
Avg. distance     & 13.93 & 6.43 & 6.46 & 11.93 & 9.15 & 14.43 \\
Max distance    & 77.89 & 73.83 & 61.97 & 57.73 & 61.58 & 20.00 \\
Gini coefficient    & 0.563 & 0.568 & 0.563 & 0.456 & 0.496 & 0.177 \\
\cmidrule(lr){1-7}
\textbf{Territorial division}        & \multicolumn{6}{c}{\textbf{Number of spatial units covered}} \\
\cmidrule(lr){1-7}
EPCIs     & 45 & 46 & 51 & 51 & 48 & 49 \\
Canton     & 113 & 116 & 119 & 192 & 133 & 127 \\
Commune     & 145 & 145 & 146 & 192 & 192 & 174 \\
\bottomrule
\end{tabular}
}
\end{table}

\section{Conclusion}
\label{sec:Conclusion}

This paper addressed the Capacitated $p$-Location Problem (C$p$LP) and its territorial coverage variants, providing strengthened ILP formulations and the Random Sampling Spatial Voting (RSSV) matheuristic. Our computational study demonstrates RSSV effectiveness in producing high-quality solutions within one hour on both literature benchmarks and real-world instances, requiring minimal parameter tuning. We conclude that our methods prove effective across diverse datasets, showing RSSV versatility for location problems. Furthermore, territorial constraints substantially impact facility layouts, revealing that for a given number of facilities, ensuring coverage of subareas can be achieved but with a loss in cost efficiency. However, improvements can be made to spatial equity.



Overall, our contributions provide a methodology for incorporating equitable territorial considerations into facility location problems, combining ILP modeling, a competitive matheuristic, and real-case study analysis. Thus a robust method for practical application has been developed. Future work includes \textcolor{black}{the study of service relocation, addition, and removal, through a more detailed analysis of other types of services and the varying costs of leaving an area without service coverage, as well as} incorporating temporal dimensions, accounting for cases where a subset of facilities may be mobile during defined time periods.

\bibliographystyle{plainnat}
\bibliography{sample}

\newpage
\appendix
\section{Supplementary Material / Appendix}
\label{sec:Appendix}


\subsection{Additional Computational Experiments}
\label{sec:appendix-experiments}

\subsubsection{Literature \textit{vs.} RSSV(C$p$MP)}
\label{sec:Exp_Literature-vs-RSSV}

We evaluated the competitiveness of the RSSV method for solving the C$p$MP using the three literature instance sets described in Section~\ref{sec:CompExp-Instances}, which are similar in size to, or smaller than, our real case study in the PACA region in terms of the sizes of $I$ and $J$. As previously noted, these instances follow a particular structure: all candidate locations have equal capacities for each value of $p$, and the distance metric is Euclidean. We tested several values of the RSSV sampling parameter $n_{\text{cand}}$ to identify one that performs consistently well across different values of $p$.

We compared RSSV against three state-of-the-art approaches: the IRMA matheuristic from~\cite{stefanello2015matheuristics}, the scalable matheuristic from~\cite{gnagi2021matheuristic}, \textcolor{black}{and the large neighborhood search from~\cite{gjergji2026large}.}
The Gap [\%] is computed relative to the best-known solution reported by~\cite{stefanello2015matheuristics}, while method comparisons use the best results reported in the more recent work of~\cite{gnagi2021matheuristic} and~\cite{gjergji2026large}.

Tables~\ref{tab:rssv_cpmp_lns_p3038_fnl4461} and~\ref{tab:rssv_cpmp_SJC} present the results, with the first three columns defined as in Table~\ref{tab:gap_comparison_arcgis}. The ratio $|J|/p$ measures facility density for each instance. Column ``\textbf{GB21}\textsubscript{best}'' reports the Gap [\%] and CPU time (Time [s]) of the best solution obtained between the two implementations in~\cite{gnagi2021matheuristic}: their matheuristic and their reimplementation of IRMA~\cite{stefanello2015matheuristics}. \textcolor{black}{Column ``\textbf{IG26}\textsubscript{LNS}'' reports the corresponding results for the large neighborhood search of~\cite{gjergji2026large}}. The remaining columns show solutions obtained with the ``ILP(C$p$MP)'' formulation from Section~\ref{sec:ProbDef} and with the ``RSSV(C$p$MP, $n_{\text{cand}}$)'' matheuristic, for two values of $n_{\text{cand}}$: $|J|$, where all locations are included, and $2p + 0.1|J|$, a reduced set used both during sampling and in the final problem phase.

The value ``\texttt{TL}'' (Time Limit) indicates that the solver reached the imposed CPU time limit of 3600 seconds; ``\texttt{ML}'' (Memory Limit) indicates that 128GB of RAM was insufficient to complete the CPLEX ILP solver run.

From the results in Table~\ref{tab:rssv_cpmp_lns_p3038_fnl4461}, we conclude that the performance of the RSSV method depends on the number of facilities $p$ and the ratio $|J|/p$. When $p$ is large (i.e., $|J|/p$ is small), RSSV($|J|$) leads to better solutions. In contrast, for small values of $p$ (i.e., large $|J|/p$ ratios), the reduced location set RSSV($2p + 0.1|N|$) performs better. This indicates that adjusting the number of candidate locations based on the instance size and density helps find a good solution quality for the instances just by changing the sampling and final problem candidates sizes. 

Table~\ref{tab:rssv_cpmp_SJC} presents the results obtained on the benchmark instance set SJC. The results show that our approach does not reduce the optimality gap as quickly as the methods from the literature, consistent with our expectations that it is not the most effective option for small instances, whereas the ILP method, run without a time limit, finds the optimal solution in less than 600 seconds for all of these instances.


\begin{table}[htbp]
\centering
\caption{Comparison of RSSV(C$p$MP) with the best solutions for the instances p3038 and fnl4461.}
\label{tab:rssv_cpmp_lns_p3038_fnl4461}
{\tiny

\begin{tabular}{
        l 
        S[table-format=4.0] 
        S[table-format=4.0] 
        S[table-format=3.1] 
        S[table-format=3.2, detect-weight] 
        S[table-format=4.0] 
        S[table-format=2.2, detect-weight] 
        S[table-format=4.0] 
        S[table-format=2.2, detect-weight] 
        S[table-format=4.0] 
        S[table-format=2.2, detect-weight] 
        S[table-format=4.0] 
        S[table-format=2.2, detect-weight] 
        S[table-format=4.0] 
    }
    \toprule
    \textbf{Instance} & \(\boldsymbol{|J| = |I|}\) & \(\boldsymbol{p}\) & \(\boldsymbol{|J|/p}\) &
    \multicolumn{2}{c}{\textbf{ILP}} &
    \multicolumn{2}{c}{\textbf{GB21}\textsubscript{best}} &
    \multicolumn{2}{c}{\textbf{IG26}\textsubscript{LNS}} &
    \multicolumn{2}{c}{\textbf{RSSV}(\(\boldsymbol{|J|}\))} &
    \multicolumn{2}{c}{\textbf{RSSV}(\(\boldsymbol{2p + 0.1|J|}\))} \\
    \cmidrule(lr){5-6} \cmidrule(lr){7-8} \cmidrule(lr){9-10} \cmidrule(lr){11-12} \cmidrule(lr){13-14}
    & & & 
    & {Gap [\%]} & {CPU [s]} 
    & {Gap [\%]} & {CPU [s]} 
    & {Gap [\%]} & {CPU [s]} 
    & {Gap [\%]} & {CPU [s]} 
    & {Gap [\%]} & {CPU [s]} \\
    \hline
    p3038\_600      & 3038 & 600  & 5.1  & 142.32 & \texttt{TL} & 0.03 & \texttt{TL} & \bfseries 0.01 & \texttt{TL} & 0.09 & \texttt{TL} & 6.60 & \texttt{TL} \\
    p3038\_700      & 3038 & 700  & 4.3  & 26.33  & \texttt{TL} & 0.03 & \texttt{TL} & \bfseries 0.00 & \texttt{TL} & 0.02 & \texttt{TL} & 5.50 & \texttt{TL} \\
    p3038\_800      & 3038 & 800  & 3.8  & 8.51   & \texttt{TL} & 0.03 & \texttt{TL} & 0.02 & \texttt{TL} & \bfseries 0.01 & \texttt{TL} & 5.30 & \texttt{TL} \\
    p3038\_900      & 3038 & 900  & 3.4  & 67.28  & \texttt{TL} & 0.02 & \texttt{TL} & \bfseries 0.01 & \texttt{TL} & \bfseries 0.01 & \texttt{TL} & 3.70 & \texttt{TL} \\
    p3038\_1000     & 3038 & 1000 & 3.0  & \texttt{ML} & \texttt{TL} & 0.04 & \texttt{TL} & \bfseries 0.00 & \texttt{TL} & \bfseries 0.00 & \texttt{TL} & 3.00 & \texttt{TL} \\
    fnl4461\_0020   & 4461 & 20   & 223.1 & \texttt{ML} & \texttt{TL} & 0.28 & \texttt{TL} & \bfseries 0.12 & \texttt{TL} & 42.86 & \texttt{TL} & 0.13 & \texttt{TL} \\
    fnl4461\_0100   & 4461 & 100  & 44.6  & \texttt{ML} & \texttt{TL} & 0.55 & \texttt{TL} & 0.35 & \texttt{TL} & 28.95 & \texttt{TL} & \bfseries 0.07 & \texttt{TL} \\
    fnl4461\_0250   & 4461 & 250  & 17.8  & \texttt{ML} & \texttt{TL} & 0.46 & \texttt{TL} & \bfseries 0.14 & \texttt{TL} & 28.73 & \texttt{TL} & 0.25 & \texttt{TL} \\
    fnl4461\_0500   & 4461 & 500  & 8.9   & \texttt{ML} & \texttt{TL} & 0.23 & \texttt{TL} & \bfseries 0.03 & \texttt{TL} & 0.13 & \texttt{TL} & 1.06 & \texttt{TL} \\
    fnl4461\_1000   & 4461 & 1000 & 4.5   & \texttt{ML} & \texttt{TL} & 0.03 & \texttt{TL} & \bfseries 0.01 & \texttt{TL} & \bfseries 0.01 & \texttt{TL} & 1.68 & \texttt{TL} \\ 
    \hline
    \multicolumn{14}{l}{\texttt{TL}: Time limit of 3600 seconds} \\
    \multicolumn{14}{l}{\texttt{ML}: Memory limit of 128 GB RAM (out of RAM)}  \\
    \bottomrule
\end{tabular}
}
\end{table}

\begin{table}[htbp]
\centering
\caption{Comparison of RSSV(C$p$MP) with the optimal solutions for the instances SJC.}
\label{tab:rssv_cpmp_SJC}
{\tiny
\begin{tabular}{
    l 
    S[table-format=4.0] 
    S[table-format=4.0] 
    S[table-format=3.1] 
    S[table-format=2.2, detect-weight, tight-spacing=true] 
    S[table-format=2.2] 
    S[table-format=2.2, detect-weight, tight-spacing=true] 
    S[table-format=2.2] 
    S[table-format=2.2, detect-weight, tight-spacing=true] 
    S[table-format=2.2] 
    S[table-format=2.2, detect-weight, tight-spacing=true] 
    S[table-format=2.2] 
}
    \toprule
        \textbf{Instance} & $\boldsymbol{|J| = |I|}$ & $\boldsymbol{p}$ & $\boldsymbol{|J|/p}$ &
        \multicolumn{2}{c}{\textbf{ILP}} &
        \multicolumn{2}{c}{\textbf{GB21}\textsubscript{best}} &
        \multicolumn{2}{c}{\textbf{RSSV($\boldsymbol{|J|}$)}} &
        \multicolumn{2}{c}{\textbf{RSSV($\boldsymbol{\frac{|J|}{2}}$)}} \\
        \cmidrule(lr){5-6} \cmidrule(lr){7-8} \cmidrule(lr){9-10} \cmidrule(lr){11-12}
        & & & & {Gap [\%]} & {CPU [s]} & {Gap [\%]} & {CPU [s]} & {Gap [\%]} & {CPU [s]} & {Gap [\%]} & {CPU [s]} \\ 
    \hline
        SJC1      & 100 & 10  & 10.0  & \bfseries 0.00 & 10.78 & \bfseries 0.00 & 3.94   & 0.59         & 13.63 & \bfseries 0.00         & 3.93 \\
        SJC2      & 200 & 15  & 13.3  & 0.07 & \texttt{TL} & \bfseries 0.00 & 4.78   & \bfseries 0.00 & 21.98 & \bfseries 0.00         & 16.01 \\
        SJC3a     & 300 & 25  & 12.0  & 2.67 & \texttt{TL} & \bfseries 0.00 & 28.21  & 0.33         & \texttt{TL} & 0.20         & 19.22 \\
        SJC3b     & 300 & 30  & 10.0  & \bfseries 0.00 & \texttt{TL} & \bfseries 0.00 & 14.84  & \bfseries 0.00 & 20.82 & 0.70        & 7.63 \\
        SJC4a     & 402 & 30  & 13.4  & 44.70 & \texttt{TL} & \bfseries 0.00 & 42.71  & 13.09  & \texttt{TL} & 1.00         & \texttt{TL} \\
        SJC4b     & 402 & 40  & 10.1  & 35.90 & \texttt{TL} & \bfseries 0.00 & 24.18  & 0.06        & \texttt{TL} & 0.50         & 12.11 \\
    \hline
        \multicolumn{12}{l}{\texttt{TL}: Time limit of 40 seconds} \\
    \bottomrule
\end{tabular}

}
\end{table}


\subsubsection{Network Analyst \textit{vs.} ILP(C$p$MP$^r$) \textit{vs.} RSSV(C$p$MP$^r$)}
\label{sec:Exp_ArcGIS-vs-MIP-vs-RSSV}


We benchmarked RSSV against a commercial solver for the C$p$MP$^r$, using the previously described instance \texttt{paca2641}. Network Analyst is an extension of ArcGIS Pro and remains widely used by geographers and planners because of its user-friendly interface and large user community. This tool provides functionalities for solving network based optimization problems, such as routing, service area delineation, location-allocation, and closest facility analysis. It uses a network dataset to model real-world transportation systems and supports various problem types, including the location-allocation analysis layer. Within this layer, the \emph{Minimize Weighted Impedance} problem most closely matches C$p$MP$^r$ : it minimizes total travel time between demand points and selected facilities, using a multi-stage heuristic based on the vertex substitution method of \cite{teitz1968heuristic}. However, this method does not account for capacity constraints and instead solves the standard $p$-median problem ($p$MP). To enable a meaningful comparison with the capacitated version, we used the $p$ facilities locations selected by the ArcGIS Pro method and performed an optimal assignment of demand respecting facility capacities. The assignment problem is formulated as a Generalized Assignment Problem (GAP), where demand can be split among multiple facilities. If the total assigned demand exceeds the available capacity, the solution becomes infeasible under the constraints.  This problem is solved optimally within a few seconds, and therefore, its computation time was not considered in the final evaluation of the ArcGIS Pro method. We compared the solution obtained with those produced by methods that can be directly applied to the C$p$MP$^r$.

{
Table~\ref{tab:gap_comparison_arcgis} compares the solution quality, expressed as the Gap [\%], and execution time (Time [s]) for the \texttt{paca2641} dataset obtained with three approaches: ArcGIS Pro, ILP (C$p$MP$^r$), and RSSV (C$p$MP$^r$, $|J|$).
The column `Instance'' indicates the instance name, where ``\(\lvert J \rvert = \lvert I \rvert\)'' represents the number of customers and potential facility locations, and $p$ is the number of facilities to be installed. ArcGIS Pro required around 840 seconds to process each instance, therefore, this value was adopted as a fixed total time limit (``\texttt{TL}'') to others methods to ensure a fair comparison of computational performance among all methods.
A dash (``--'') denotes that ArcGIS Pro failed to generate a feasible solution due to capacity constraint violations. Among the three methods, RSSV($|J|$) produced the best solutions within the time limit of 840 seconds. 

These results were expected, since ArcGIS Pro implements a generic heuristic method for $p$MP. However, they highlight the need for efficient and practical methods to solve location problems, especially when the inclusion of territorial coverage constraints is necessary, as their implementation is not available in the current version of ArcGIS Pro Network Analyst.

}

\begin{table}[htbp]
\centering
\caption{GAP comparison for ArcGIS, ILP, and RSSV approach on \texttt{paca2641} for C$p$MP$^r$.}
\label{tab:gap_comparison_arcgis}
{\tiny
\begin{tabular}{
        l  
        S[table-format=4.0]  
        S[table-format=4.0]  
        S[table-format=2.2, detect-weight]  
        S[table-format=4.0]  
        S[table-format=2.2, detect-weight]  
        S[table-format=4.0]  
        S[table-format=2.2, detect-weight]  
        S[table-format=4.0]  
    }
    \toprule
        \textbf{Instance} & \(\boldsymbol{|J| = |I|}\) & \(\boldsymbol{p}\) &
        \multicolumn{2}{c}{\textbf{ArcGIS Pro}} & 
        \multicolumn{2}{c}{\textbf{ILP}} & 
        \multicolumn{2}{c}{\textbf{RSSV($|J|$)}} \\
        \cmidrule(lr){4-5} \cmidrule(lr){6-7} \cmidrule(lr){8-9} 
        & & & {Gap [\%]} & {Time [s]} & {Gap [\%]} & {Time [s]} & {Gap [\%]} & {Time [s]}  \\
    \hline
    \texttt{paca2641}        & 2641 & 134 & {--}    & {--}    & 80.31 & \texttt{TL} & \bfseries 15.37 & \texttt{TL} \\
    \texttt{paca2641}        & 2641 & 173 & 179.07   & 822     & 142.75 & \texttt{TL} & \bfseries 6.78  & \texttt{TL} \\
    \texttt{paca2641}        & 2641 & 192 & 133.58   & 864     & 79.19 & \texttt{TL} & \bfseries 7.57  & \texttt{TL} \\
    \texttt{paca2641}        & 2641 & 211 & 129.13   & 893     & 17.96 & \texttt{TL} & \bfseries 8.94  & \texttt{TL} \\
    \texttt{paca2641}        & 2641 & 250 & 91.55   & 879     & 20.70 & \texttt{TL} & \bfseries 1.21  & \texttt{TL} \\
    \midrule
    \multicolumn{9}{l}{\texttt{TL}: Time limit of 840 seconds} \\
    \bottomrule
\end{tabular}
}
\end{table}

\subsection{NP-Hardness of C$p$LP-TC}
\label{app:hardness_cpmptc}

{\color{black}
\begin{lemma}\label{lem:cpmp_tc_hard_p_gt_m}
C$p$LP-TC is NP-hard for instances with $p>m^s$.
\end{lemma}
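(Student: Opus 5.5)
We reduce in polynomial time from C$p$LP, which the paper takes as NP-hard. The simplest choice is $m^s=1$, which already gives $p>m^s$ whenever $p\ge 2$. A more robust choice, avoiding any dependence on restricted parameter ranges, is to pad the instance with dummy subareas. We describe the padding construction and reduce to the case $p\ge 2$ first.

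\textbf{Construction.} Take an instance $\langle J, I, p, W, R\rangle$ of C$p$LP with $p\ge 2$. This is without loss of generality: for $p=1$ the problem is solvable in polynomial time by trying every $j\in J$ and solving the resulting assignment problem, so the NP-hardness of C$p$LP persists on instances with $p\ge 2$. Build the C$p$LP-TC instance as follows.
\begin{itemize}
\item Let $S=\{I\}$ be the trivial partition, so $m^s=1<p$ and $J(S_1)=J$.
\item Constraint~(\ref{cover_all_subar}) then reads $\sum_{j\in J}y_j\ge 1$.
\item This constraint is implied by~(\ref{cPMP_Exact-p}), so the feasible sets and objectives of the two instances coincide.
\item Optimal solutions therefore correspond one-to-one, and the construction is clearly polynomial.
\end{itemize}

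\textbf{Main obstacle.} The delicate point is the model's convention that "subareas" partition the customer set $I$ while $J(S_k)$ locates candidates, together with the requirement that the reduction apply to both variants (C$p$MP and C$p$MP$^r$). With $m^s=1$ both issues disappear, since $J(S_1)=J$ and the same argument works verbatim for integral or fractional $x$.

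\textbf{Strengthening for arbitrary $m^s<p$.} If one wants hardness for any prescribed value $m^s<p$, not just $m^s=1$, pad the instance instead:
\begin{itemize}
\item Add $m^s-1$ dummy subareas. Each contains one dummy customer of demand $0$ and one dummy candidate location of capacity $0$.
\item Set the costs so that each dummy customer has cost $0$ to its own dummy site, and every other customer–site pair involving a dummy has cost $0$ as well.
\item Replace $p$ by $p+m^s-1$.
\end{itemize}
Coverage forces every dummy site to open. A dummy site has zero capacity, so it serves no real demand, and its presence changes no real cost. The remaining $p$ facilities must then lie in the real subarea. This preserves optimal values exactly, and the dummy customers can be assigned to their own sites at cost $0$.
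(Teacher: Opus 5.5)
Your main argument is correct, but it is not the route the paper takes for this lemma. The paper uses the trivial partition $m^s=1$ only in the main text, to get NP-hardness of C$p$LP-TC in general. For this lemma it pads a C$p$LP instance with $d=m^s-1\ge 1$ dummy pairs $(i_k,j_k)$. Each pair forms its own subarea with $J(S_{k+1})=\{j_k\}$, $\mathrm{dist}(i_k,j_k)=0$, $R_{j_k}=W_{i_k}$, and infinite cost between dummy and real elements. Setting $p'=p+d$, constraint \eqref{cover_all_subar} forces every dummy site open, and exactly $p$ real facilities remain. Your $m^s=1$ reduction is shorter and works verbatim for integral and fractional $x$. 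Your justification that $p\ge 2$ is without loss of generality also fills a step the paper simply assumes. What the padding buys is hardness for every prescribed number of subareas $m^s\ge 2$, not only for the degenerate one-subarea partition. The paper's remark that hardness holds ``for any values of $m^s$ and $p$'' relies on this.

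Your optional strengthening is essentially the paper's padding, except that real customers are kept away from dummy sites by zero capacity rather than by infinite cost. As written it has one hole. You give every real-customer/dummy-site pair cost $0$, but zero capacity only blocks customers with $W_i>0$. A real customer with $W_i=0$ satisfies $W_i x_{ij_k}\le 0$ for any $x_{ij_k}$, so it can be served by a dummy site at cost $0$. In the original instance such a customer still pays $\min_{j\ \mathrm{open}} cost(i,j)$. So the objective drops below the original value, and the set of optimal real sites can change. Either restrict the source instances to strictly positive demands, or, as the paper does, make those cross costs prohibitively large. The restriction is harmless, since the uncapacitated $p$-median problem with unit demands is already NP-hard. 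With either fix the strengthening is sound.
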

\begin{proof}
Take an instance $\langle J, I, p, W, R \rangle$ of C$p$LP \citep{Kariv1979} with $p\ge2$.
Let $d=m^s-1$. Build a C$p$LP-TC instance by adding $d\ge1$ dummy pairs $(i_k,j_k)$ in sets $I$ and $J$, $k=1,\dots,d$,
with $\mathrm{dist}(i_k,j_k)=0$, $R_{j_k}=W_{i_k}$, and
$\mathrm{dist}(i,j)=\infty$ whenever exactly one of $i,j$ is a dummy element.
Define $S_1=I$, $J(S_1)=J$, and $S_{k+1}=\{i_k\}$, $J(S_{k+1})=\{j_k\}$, so
$m^s=d+1$. Set $p'=p+d$. Since $p\ge2$, $p'=p+d>1+d=m^s$, so constraint
(\ref{cover_all_subar}) applies. See Figure~\ref{fig:lemma1_construction}

Because $J(S_{k+1})=\{j_k\}$ is a singleton, (\ref{cover_all_subar}) forces
$y_{j_k}=1$ for every $k$. The distances defined for the dummy locations force $i_k$ to be
served by $j_k$ at zero cost and prevent any real customer from being served by a dummy facility. This uses exactly $d$ of the $p'$ facilities,
the remaining $p$ must be opened in $J$ to serve $I$, giving a feasible
C$p$LP solution of the same cost, and conversely. Hence, the reduction is polynomial and preserves the optimal value. Therefore, C$p$LP-TC is NP-hard for $p > m^s$.
\end{proof}

\begin{lemma}\label{lem:cpmp_tc_hard_p_leq_m}
C$p$LP-TC is NP-hard for instances with $p\le m^s$.
\end{lemma}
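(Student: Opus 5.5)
We reduce from the C$p$LP, which is NP-hard~\citep{Kariv1979}, mirroring the dummy-pair construction of Lemma~\ref{lem:cpmp_tc_hard_p_gt_m}. Here the constraint that applies is (\ref{cover_must_subar}), which caps every subarea at one facility. The trivial reduction $m^s=1$ fails: with $p\ge 2$ a single subarea would forbid opening $p$ facilities. The plan is therefore to make each original candidate location its own subarea, so that the cap is vacuous for the original locations.

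Take an instance $\langle J, I, p, W, R\rangle$ of C$p$LP and write $J=\{1,\dots,n\}$ with $n\ge p$. Because the paper's subareas partition the customer set and $J(S_k)$ lists the locations inside $S_k$, we first attach to each $j\in J$ a dummy customer $i'_j$ with demand $W_{i'_j}=0$. We set $\mathrm{cost}(i'_j,j')=0$ for all $j'$, so these customers never affect feasibility or cost. We then define the subareas $S_j=\{i'_j\}$ with $J(S_j)=\{j\}$ for $j=1,\dots,n$.

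The real customers of $I$ still have to be placed in the partition. We put them in an additional subarea $S_0=I$ with $J(S_0)=\emptyset$; if an empty location set is not allowed, we give $S_0$ one dummy location $j_0$ of capacity $0$ and infinite cost to every real customer. The new instance has $m^s=n+1\ge p$ subareas and the same $p$, so constraint (\ref{cover_must_subar}) is the one that applies.

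Next we check that (\ref{cover_must_subar}) is vacuous. For each $j\ge1$ it reads $y_j\le 1$, which the binary domain already guarantees, and for $S_0$ it reads $y_{j_0}\le 1$. Hence any feasible C$p$LP solution, extended by assigning each dummy customer anywhere at zero cost, is feasible for C$p$LP-TC with identical cost.

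Conversely, take any feasible C$p$LP-TC solution. If $j_0$ is opened, it has capacity $0$ and infinite cost to real customers, so it serves nothing useful. We can swap $j_0$ for an unopened real location, which exists since $n\ge p$, and the cost does not increase. Dropping the dummy customers then gives a C$p$LP solution of the same value. The construction adds $O(|J|)$ elements, so the reduction is polynomial and preserves the optimal value. The argument works equally for integer and fractional $x$, so it covers both C$p$MP and C$p$MP$^r$.

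The main obstacle is matching the paper's formal setup, in which subareas partition $I$ and candidate locations are tied to subareas. The step to handle carefully is the treatment of $S_0$ and of zero-demand customers, so that every location belongs to exactly one subarea and the dummies introduce neither infeasibility nor cost. If zero demands are disallowed, we would use demand $\epsilon$ together with a matching small capacity added to $j$; this adds a small bookkeeping argument.
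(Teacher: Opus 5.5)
Your proof is correct and uses the same idea as the paper: put each candidate location in its own subarea, so that constraint (\ref{cover_must_subar}) reduces to $y_j\le 1$ and the C$p$LP instance embeds with the same feasible set and cost. The paper simply takes $S=\{\{j\}:j\in J\}$ and $m^s=|J|$, tacitly identifying customers with locations (as when $I=J$), whereas your zero-demand dummy customers and the location-free subarea $S_0$ make the construction respect the requirement that $S$ partition $I$, which is legitimate extra rigour. Only your closing $\epsilon$-demand fallback is shaky as stated, since extra capacity $\epsilon$ at $j$ does not help when $j$ is closed, but it is not needed because the formulation permits zero demands.
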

\begin{proof}
Take any instance $\langle J, I, p, W, R \rangle$ of C$p$LP \citep{Kariv1979}. Build a C$p$LP-TC
instance with the same $\langle J, I, p, W, R \rangle$, setting $m^s=|J|$ and
$S=\{\{j\}:j\in J\}$. Since $p\le|J|=m^s$ and each $J(S_k)$ is a singleton, any solution obtained for  C$p$LP-TC is a solution for the C$p$LP. We conclude
C$p$LP-TC is NP-hard for $p\le m^s$.
\end{proof}
}

\subsection{Additional figures}
\label{sec:appendix-figures}

\begin{figure}[htbp]
\centering
\begin{tikzpicture}[
    facnotopen/.style={rectangle, draw, minimum size=6mm},
    facility/.style={rectangle, draw, fill=black!20, minimum size=6mm},
    customer/.style={circle, draw, fill=teal!30, minimum size=5mm},
    dummy/.style={rectangle, draw, fill=gray!20, minimum size=6mm},
    dummycust/.style={circle, draw, fill=gray!40, minimum size=5mm},
    subarea/.style={draw, dashed, rounded corners, inner sep=8pt},
    every node/.append style={font=\small}
]

\begin{scope}
\node[facility] (j1) at (0,1) {$j_1$};
\node[facnotopen] (j2) at (1,1.3) {$j_2$};
\node[facility] (j3) at (2,0.8) {$j_3$};
\node[customer] (i1) at (0.3,0) {};
\node[customer] (i2) at (1.2,0.2) {};
\node[customer] (i3) at (2,0) {};

\draw[thin] (i1) -- (j1);
\draw[thin] (i2) -- (j3);
\draw[thin] (i3) -- (j3);

\node[subarea, fit=(j1)(j2)(j3)(i1)(i2)(i3), label={[font=\small]above:$S_1 = I,\ J(S_1)=J$}] (S1) {};
\end{scope}

\node[dummy] (jd1) at (5,1) {$j_1'$};
\node[customer, fill=gray!40] (id1) at (5,0.2) {};
\node[subarea, fit=(jd1)(id1), label={[font=\small]above:$S_2$}] (Sd1) {};
\draw[thick] (jd1) -- (id1) node[midway, right] {\scriptsize $0$};

\node[dummy] (jd2) at (6.5,1) {$j_2'$};
\node[customer, fill=gray!40] (id2) at (6.5,0.2) {};
\node[subarea, fit=(jd2)(id2), label={[font=\small]above:$S_3$}] (Sd2) {};
\draw[thick] (jd2) -- (id2) node[midway, right] {\scriptsize $0$};

\node at (7.6,0.6) {$\cdots$};

\node[dummy] (jdd) at (8.7,1) {$j_d'$};
\node[customer, fill=gray!40] (idd) at (8.7,0.2) {};
\node[subarea, fit=(jdd)(idd), label={[font=\small]above:$S_{d+1}$}] (Sdd) {};
\draw[thick] (jdd) -- (idd) node[midway, right] {\scriptsize $0$};

\draw[<->, dotted, red] (S1.east) -- (Sd1.west) node[midway, above, font=\scriptsize] {dist $=\infty$};

\node[facility] at (0,-1.3) {};
\node[right=2mm of {(0,-1.3)}, font=\scriptsize] {selected facilities};
\node[customer] at (3.3,-1.3) {};
\node[right=2mm of {(3.3,-1.3)}, font=\scriptsize] {customer};
\node[dummy] at (6,-1.3) {};
\node[right=2mm of {(6,-1.3)}, font=\scriptsize] {dummy facility};
\node[dummycust] at (8.5,-1.3) {};
\node[right=2mm of {(8.5,-1.3)}, font=\scriptsize] {dummy customer};

\end{tikzpicture}
\caption{Construction for Lemma~\ref{lem:cpmp_tc_hard_p_gt_m}: the original C$p$LP instance is placed in a single subarea $S_1$, while $d$ dummy subareas $S_2,\dots,S_{d+1}$ each contain one isolated dummy pair $(i_k',j_k')$ at distance $0$.
}
\label{fig:lemma1_construction}
\end{figure}

\begin{figure}[htbp]
    \caption{Flowchart of the RSSV matheuristic. The procedure consists of five main steps. When time limits allow, a sixth post-optimization step can further refine the final result.}
    \hspace{-2cm} 
    \centering
    \begin{tikzpicture}[scale=0.6, node distance=2cm, every node/.style={transform shape, font=\Large}, every brace/.append style={line width=0.7pt}]
\node (start) [startstop] {Original Dataset $\langle J, I, p, W, R \rangle$};
\node (sample1) [process, below left of=start, xshift=-4cm, yshift=-1cm] {\( \langle J_1, I, p, W, R \rangle \)};
\node (sample2) [process, below of=start, yshift=-0.5cm] {...};
\node (sampleM) [process, below right of=start, xshift=4cm, yshift=-1cm] {\( \langle J_M, I, p, W, R \rangle \)};

\node (group1) [groupbox, fit=(sample1)(sampleM)(sample2)] {};
\node at (group1.west) [left, xshift=-0.5cm, align=center] {\textbf{R}andom \\ \textbf{S}ampling};

\node (sol1) [process, below of=sample1, yshift=-1cm] {$f_1$};
\node (sol2) [process, below of=sample2, yshift=-1cm] {...};
\node (solM) [process, below of=sampleM, yshift=-1cm] {$f_M$};

\node (group2) [groupbox, fit=(sol1)(solM)(sol2)] {};
\node at (group2.west) [left, xshift=-0.5cm, align=center] {Sub-Problem \\ Solving};

\node (voting) [process, below of=sol2, yshift=-0.7cm] {Voting};

\node (group3) [groupbox, fit=(voting), inner xsep=5pt, inner ysep=5pt] {};
\node at (group3.west) [left, xshift=-4.0cm, align=center] {\textbf{S}patial \\ \textbf{V}oting};

\node (finalSubproblem) [process, below of=voting, yshift=-0.5cm] {\( \langle J_{n_{\text{cand}}}, I, p, W, R \rangle \)};
\node (finalSolution) [process, below of=finalSubproblem, yshift=-0.5cm] {$f^*$};

\node (postOptimization) [process, right of=finalSolution, xshift=5cm] {Post- Optimization};

\node (filtering) [groupbox, fit=(finalSubproblem), inner xsep=3pt, inner ysep=3pt] {}; 
\node at (filtering.west) [left, xshift=-3.5cm, align=center] {Filtering};

\node (finalsol) [groupbox, fit=(finalSolution), inner xsep=3pt, inner ysep=3pt] {}; 
\node at (finalsol.west) [left, xshift=-3cm, align=center] {Final Problem \\ Solving};

\draw [arrow] (start) -- (sample1);
\draw [arrow] (start) -- (sample2);
\draw [arrow] (start) -- (sampleM);

\draw [arrow] (sample1) -- (sol1);
\draw [arrow] (sample2) -- (sol2);
\draw [arrow] (sampleM) -- (solM);

\draw [arrow] (sol1) -- (voting);
\draw [arrow] (sol2) -- (voting);
\draw [arrow] (solM) -- (voting);

\draw [arrow] (voting) -- (finalSubproblem);
\draw [arrow] (finalSubproblem) -- (finalSolution);

\draw [arrow] (finalSolution.east) -- (postOptimization.west);




\end{tikzpicture}
    
    \label{fig:rssv_flowchart}
\end{figure}

\begin{algorithm}[htbp]
\caption{Post-Optimization via Progressive Neighborhood Expansion}
\label{alg:post_optimization}
\vskip4pt
\begin{algorithmic}
\Procedure{PostOptimize}{$f^*, J, I, p, W, R, t_{\text{rem}}$}
    \Comment{$t_{\text{rem}}$: Remaining time available}
    \State $f^+ \gets f^*$
    \State $v \gets 1$
    \While{$t_{\text{rem}} > 0$ \textbf{and} $|J'| < |J|$}
        \State $J' \gets f^+$
        \For{each $f_g \in f^+$}
            \State $j' \gets v$-th closest neighbor of $f_g$ in $J \setminus J'$
            \State $J' \gets J' \cup \{j'\}$
        \EndFor
        \State $D \gets \max_{i \in I} d(i, f^+(i))$ \Comment{Max distance from customers to assigned facility}
        \State $\tilde{f} \gets$ \Call{SolveILP}{$J', I, p, W, R, D$}
        \If{$\tilde{f}$ improves $f^+$}
            \State $f^+ \gets \tilde{f}$
            \State $v \gets 1$
        \Else
            \State $v \gets v + 1$
        \EndIf
        \State update $t_{\text{rem}}$
    \EndWhile
    \State \textbf{return} $f^+$
\EndProcedure
\Statex
\end{algorithmic}
\end{algorithm}

\begin{figure}[htbp]
    \caption{Population distribution of \texttt{paca2641} dataset. Areas shown in red correspond to higher population density, while lighter tones indicate sparsely populated zones, highlighting the strong spatial heterogeneity of the region in terms of population size.}
    \centering
    \includegraphics[width=0.6\linewidth]{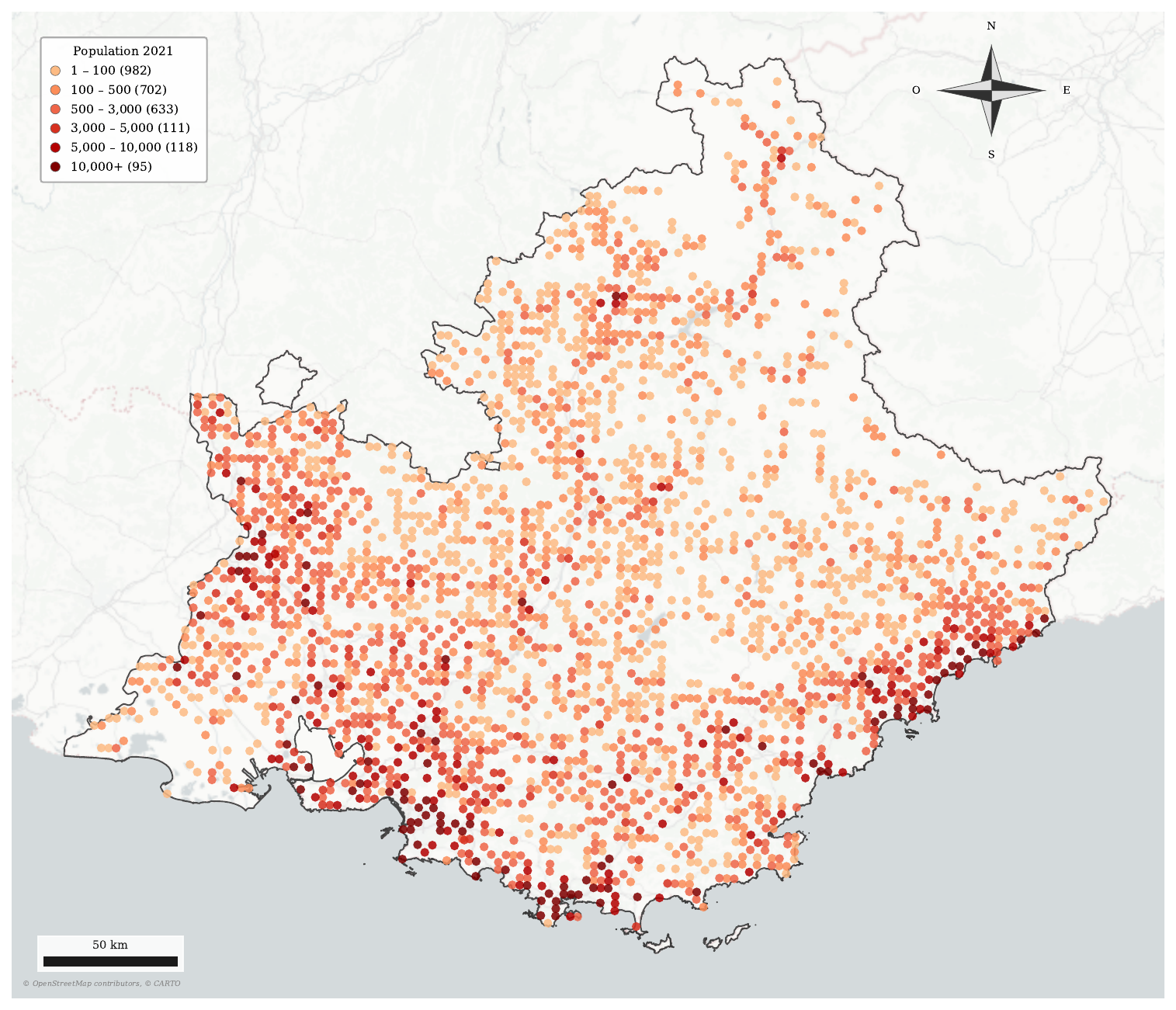}
    
    \label{fig:paca2641_population_distribution}
\end{figure}

\begin{figure}[htbp]
    \caption{Spatial analysis for fixed $p=250$ and $n_{\text{cand}}=1600$ with different $\kappa$ values for the \texttt{paca5282} instance.}
    \centering
    \begin{subfigure}[t]{0.32\textwidth}
        \centering
        \includegraphics[width=\linewidth]{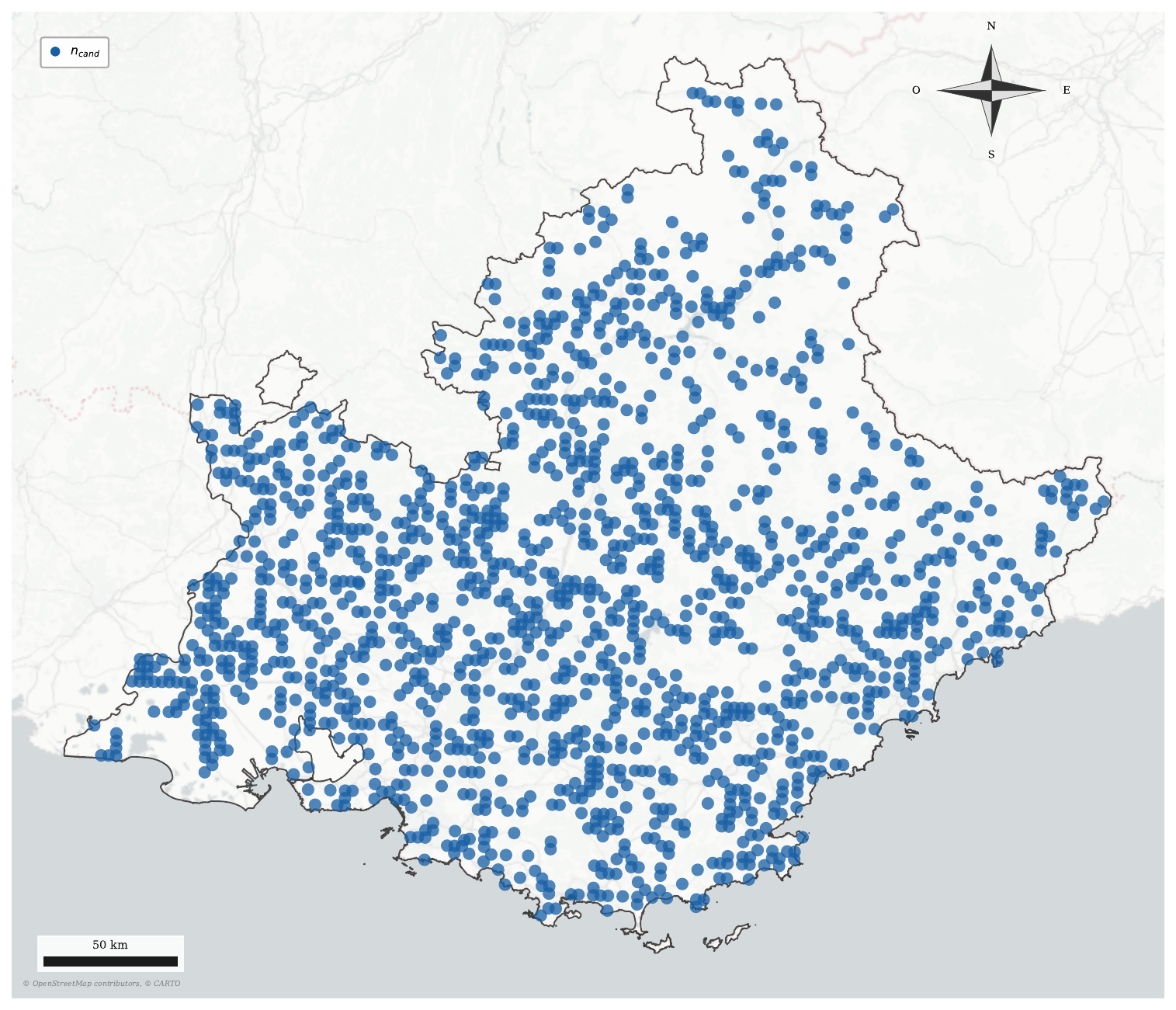}
        \caption{$\kappa = 1$}
        \label{fig:spatial_p5282_k1}
    \end{subfigure}
    \hfill
    \begin{subfigure}[t]{0.32\textwidth}
        \centering
        \includegraphics[width=\linewidth]{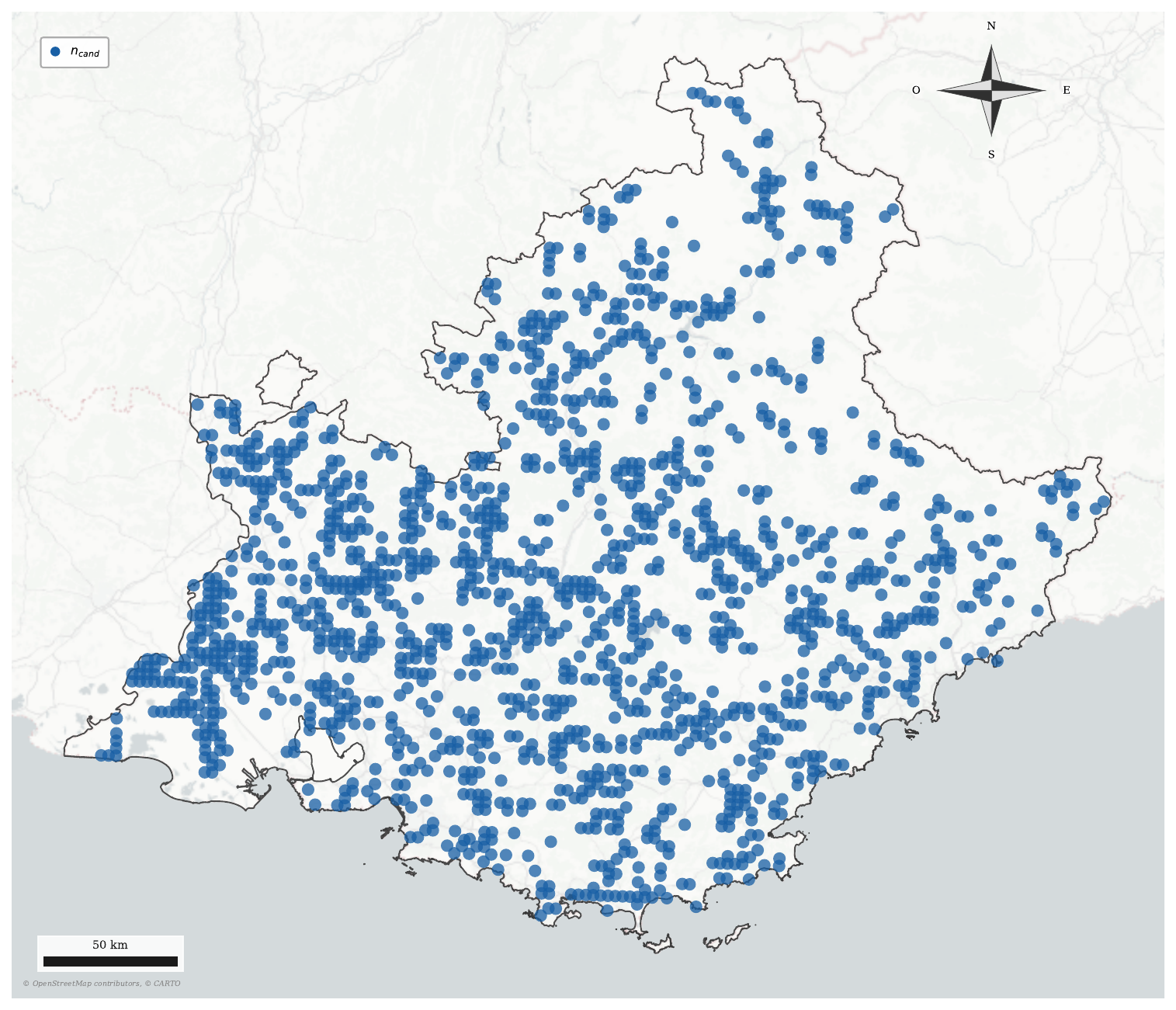}
        \caption{$\kappa = 2$}
        \label{fig:spatial_p5282_k2}
    \end{subfigure}
    \hfill
    \begin{subfigure}[t]{0.32\textwidth}
        \centering
        \includegraphics[width=\linewidth]{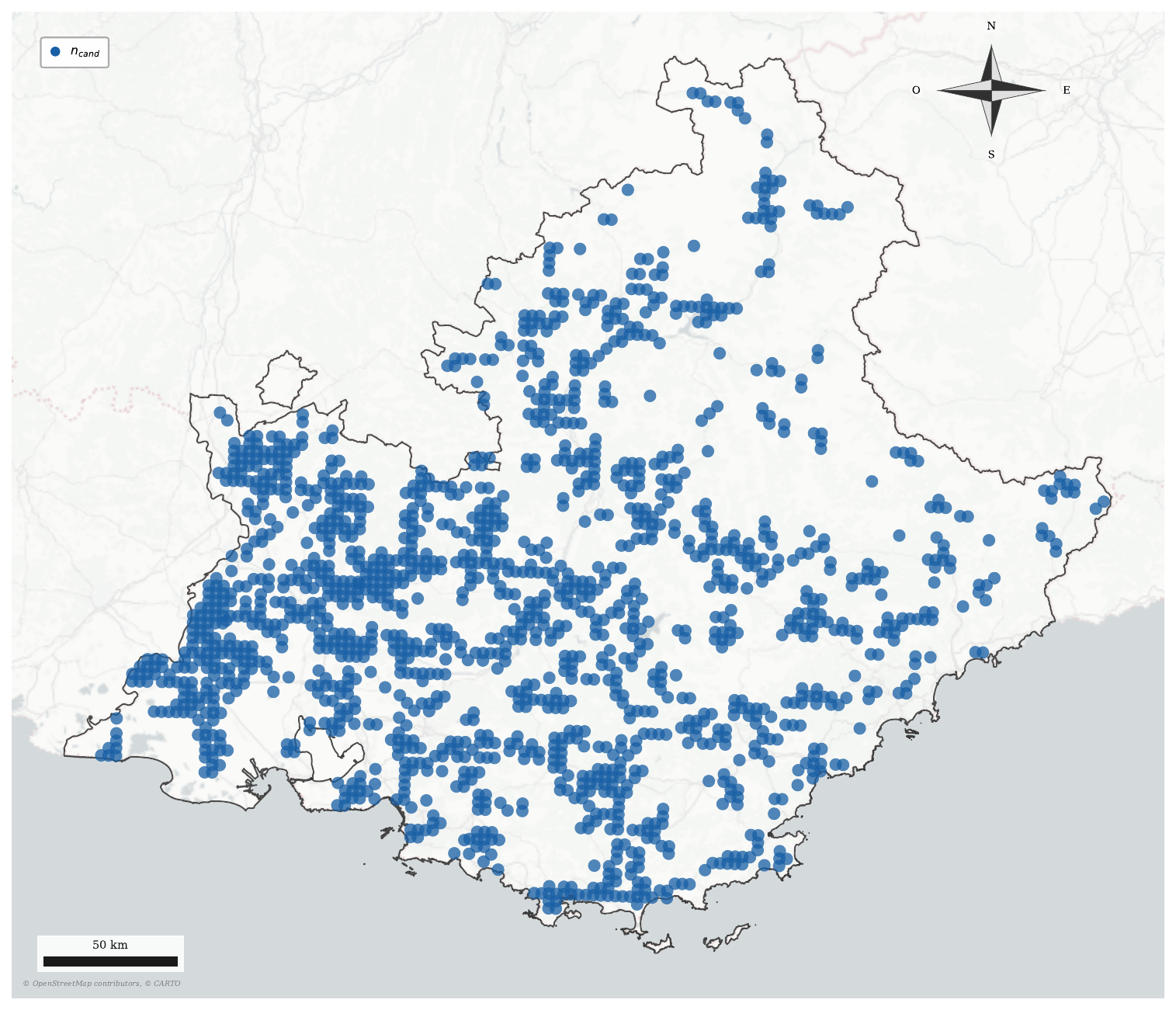}
        \caption{$\kappa = 3$}
        \label{fig:spatial_p5282_k3}
    \end{subfigure}
    
    \label{fig:study_spatial_kappa_paca5282}
\end{figure}

\begin{figure}[htbp]
    \caption{This example illustrates three different territorial divisions of the same region: $S^1$, $S^2$, and $S^3$, partitioning the area into 15, 6, and 2 spatial units, respectively. The divisions are hierarchical and satisfy the relations shown above.}
    
    \vspace{0.3cm} 
    
    {\small
    \centering
    $\begin{aligned}
    S_1^2 &= \bigcup_{k=1}^{3} S_k^1, &
    S_2^2 &= \bigcup_{k=4}^{5} S_k^1, &
    S_3^2 &= \bigcup_{k=6}^{8} S_k^1, \\
    S_4^2 &= \bigcup_{k=9}^{11} S_k^1, &
    S_5^2 &= \bigcup_{k=12}^{13} S_k^1, &
    S_6^2 &= \bigcup_{k=14}^{15} S_k^1,
    \end{aligned}$
    
    \vspace{0.2cm}
    and at the next scale,
    \vspace{0.2cm}
    
    $\begin{aligned}
    S_1^3 = \bigcup_{k=1}^{2} S_k^2,
    \qquad\qquad
    S_2^3 = \bigcup_{k=3}^{6} S_k^2.
    \end{aligned}$
    \par}

     \vspace{0.2cm}
    
    \centering
    \begin{subfigure}[t]{0.32\textwidth}
        \centering
        \includegraphics[width=1\linewidth]{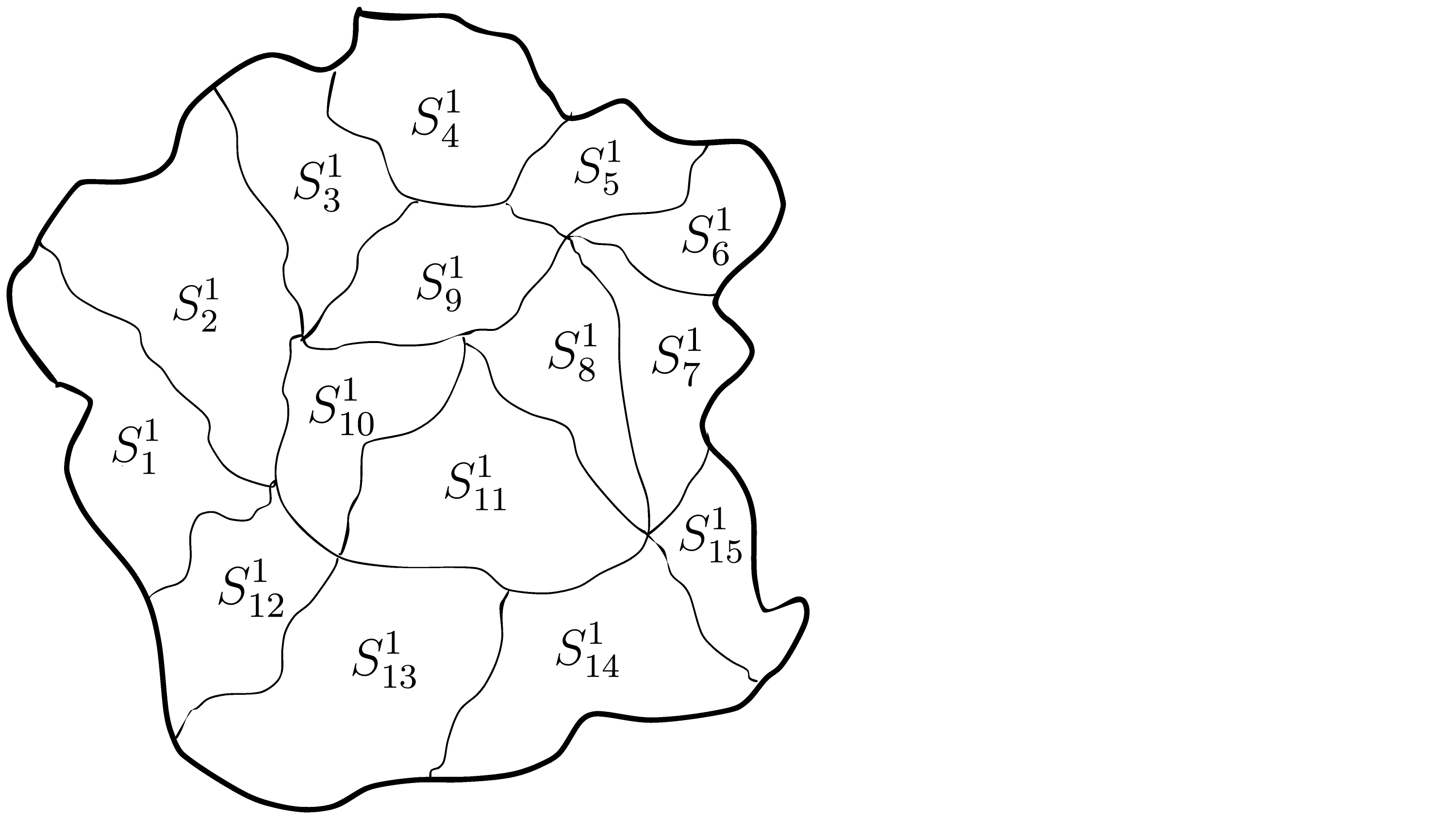}
        
        \caption{$S^1 = \{S_1^1,S_2^1, \ldots, S_{15}^1\}$}
        \label{fig:multi_s1}
    \end{subfigure}
    \hfill
    \begin{subfigure}[t]{0.32\textwidth}
        \centering
        \includegraphics[width=\linewidth]{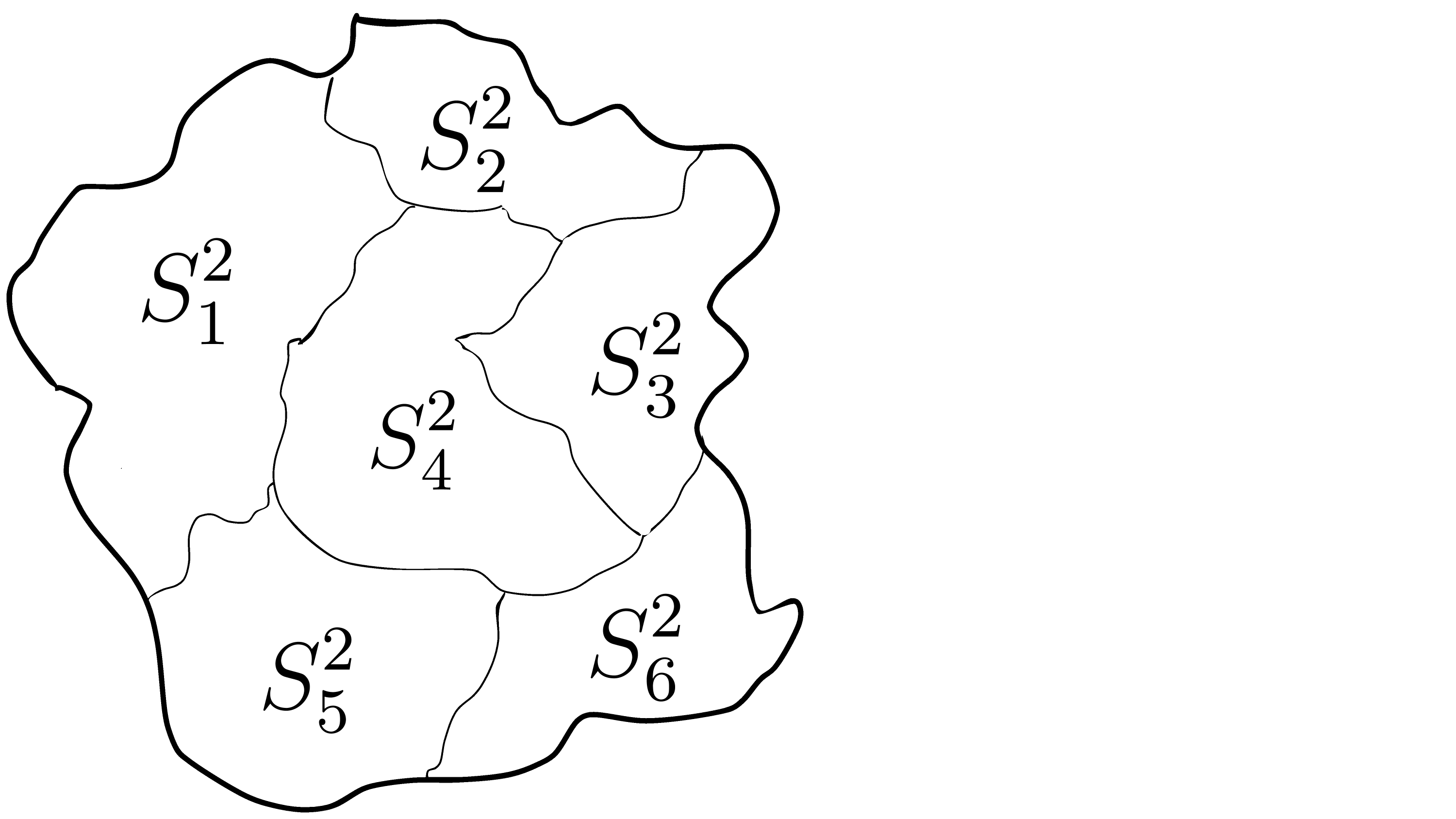}
        \caption{$S^2 = \{S_1^2,S_2^2, \ldots, S_{6}^2\}$}
        \label{fig:multi_s2}
    \end{subfigure}
    \hfill
    \begin{subfigure}[t]{0.32\textwidth}
        \centering
        \includegraphics[width=\linewidth]{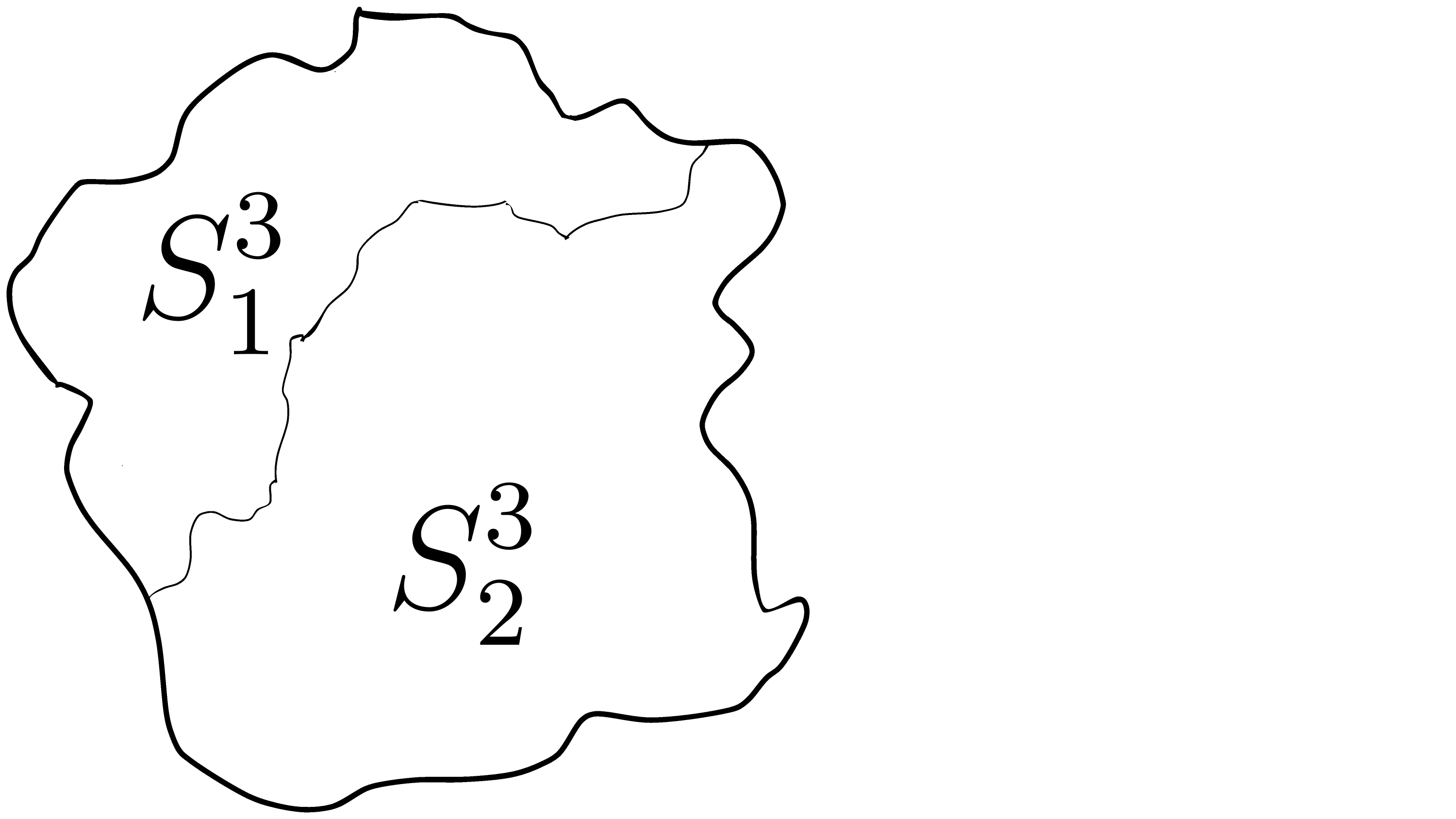}
        \caption{$S^3 = \{S_1^3,S_2^3\}$}
        \label{fig:multi_s3}
    \end{subfigure}
    
    \vspace{0.4cm} 
    
    \label{fig:multiscales_coverage_example}
\end{figure}

\begin{figure}[htbp]
    \caption{Two solution examples for different values of $p$, using the three territorial divisions of Figure~\ref{fig:multiscales_coverage_example}. Triangles denote candidate facility locations, with black triangles indicating those selected . For $p = 4$ ($|S^3| \le p \le |S^2|$), all spatial units in $S^3$ are covered, and the remaining facilities cover units in $S^2$, thereby guaranteeing the coverage of $S^3$. For $p = 8$ ($|S^2| \le p \le |S^1|$), all units in $S^2$ are covered (which also guarantees the coverage of $S^3$), the remaining facilities covering units in $S^1$.}
    \centering
    \begin{subfigure}[t]{0.48\textwidth}
        \centering
        \includegraphics[width=\linewidth]{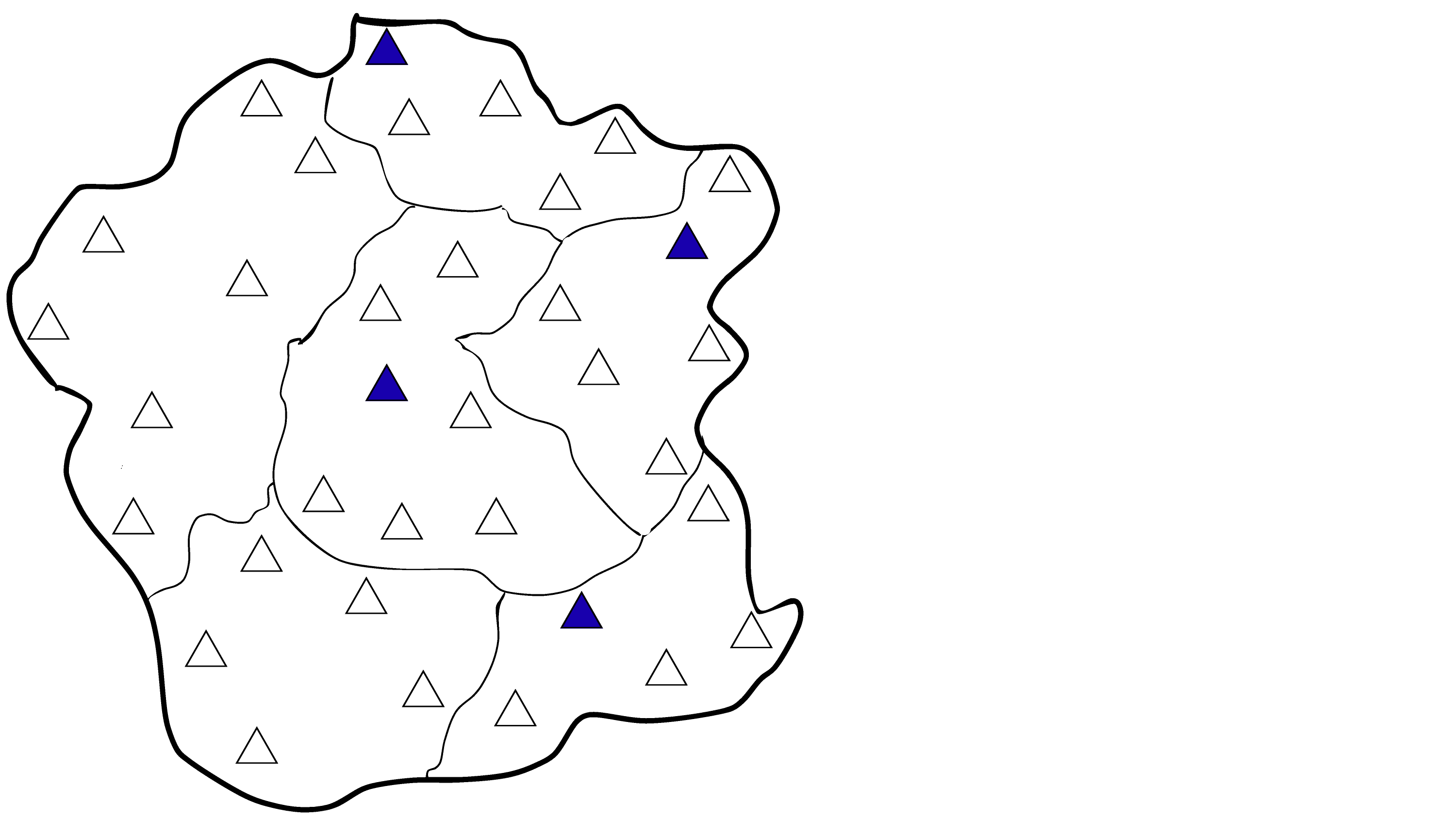}
        \caption{Solution with $p = 4$ and territorial division $S^2$}
        \label{fig:sol_p4}
    \end{subfigure}
    \hfill
    \begin{subfigure}[t]{0.48\textwidth}
        \centering
        \includegraphics[width=\linewidth]{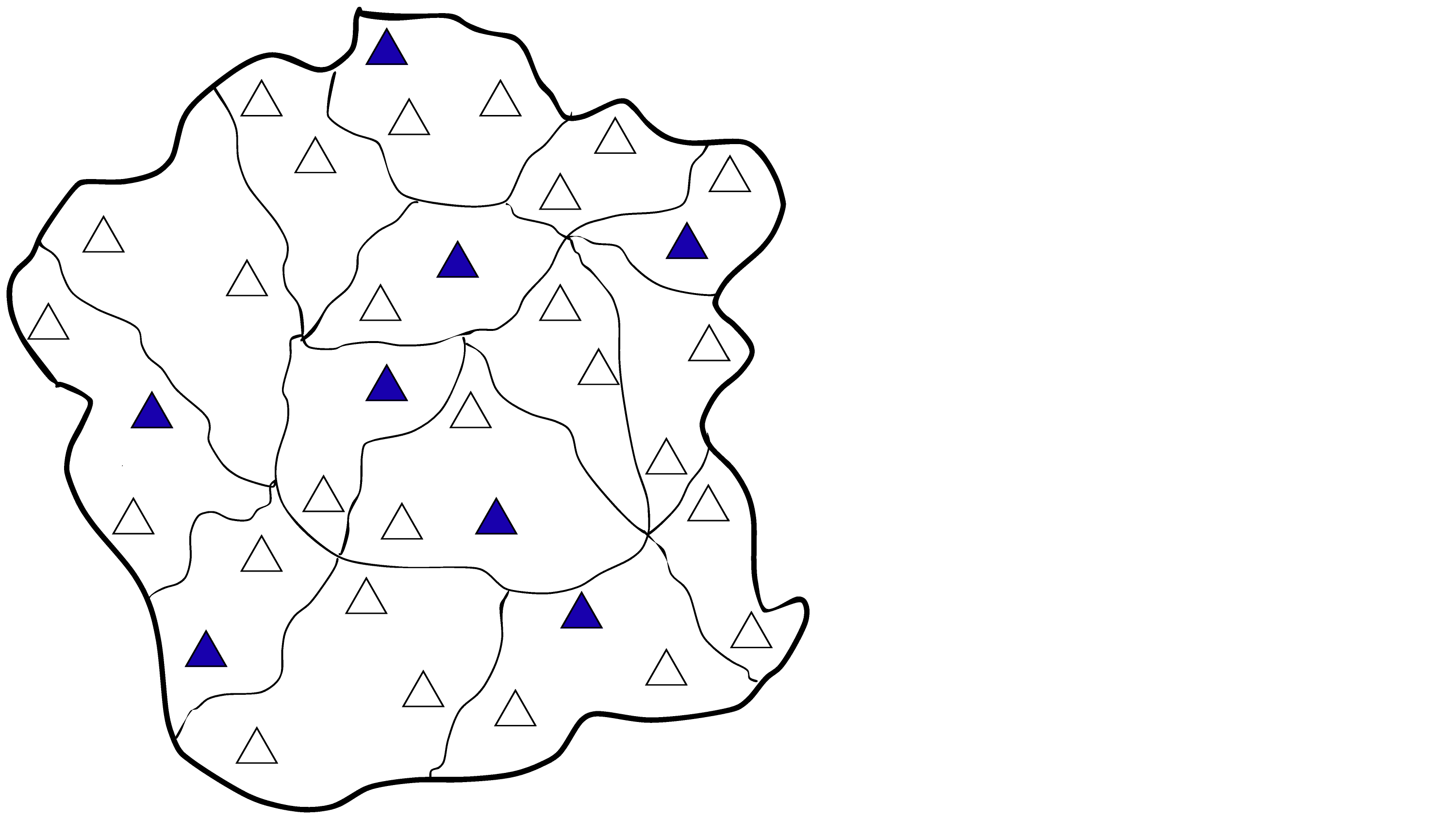}
        \caption{Solution with $p = 8$ and territorial division $S^1$}
        \label{fig:sol_p8}
    \end{subfigure}
    
    \label{fig:multiscales_coverage_solutions_example}
\end{figure}

\end{document}